\newtheorem{theorem}{Theorem}[section]
\newtheorem{lemma}[theorem]{Lemma}
\renewcommand{\geq}{\geqslant}
\theoremstyle{definition}
\newtheorem{example}[theorem]{Example}
\theoremstyle{definition}
\newtheorem{remark}[theorem]{Remark}
\numberwithin{equation}{section}
\newcommand{\ZZ} {\mathbb{Z}}
\newcommand{\HH} {\mathbb{H}}
\newcommand{\RR} {\mathbb{R}}
\newcommand{\CC} {\mathbb{C}}
\newcommand{\PP} {\mathbb{P}}
\renewcommand{\geq}{\geqslant}
\newcommand{\Hom}{\mathrm{Hom}}
\newcommand{\fT}{\mathfrak{T}}
\newcommand{\fS}{\mathfrak{S}}
\newcommand{\cP}{\mathcal{P}}
\newcommand{\FS}{\mathrm{FS}}
\def\vth{\vartheta}
\def\cX{\mathcal{X}}
\def\sO{\mathscr{O}}
\def\sL{\mathscr{L}}
\def\la{\langle}
\def\ra{\rangle}
\numberwithin{equation}{section} \numberwithin{figure}{section}
\begin{document}
\title[Balanced embedding of degenerating  Abelian varieties]{Balanced embedding of degenerating  Abelian varieties}
\author[X. Wang]{Xiaowei Wang}
\thanks{ }
\address{Department of Mathematics and Computer Sciences, Rutgers University, Newark NJ 07102-1222, USA}
\email{xiaowwan@rutgers.edu}
  \author[Y. Zhang]{Yuguang Zhang}
 \thanks{ The first named  author is partially supported by a Collaboration Grants for Mathematicians from Simons Foundation and  the second named  author is supported in part by  grant NSFC-11271015. }
\address{Yau Mathematical Sciences Center,  Tsinghua University,  Beijing 100084, P.R.China.}
\email{yuguangzhang76@yahoo.com}

\begin{abstract}
For a certain maximal unipotent  family of    Abelian varieties over  the punctured disc, we show that
after a  base  change, one can complete the  family  over a disc  such that  the whole degeneration  can be simultaneously balanced embedded into  a projective space  by the    theta functions.  Then we study the   relationship   between  the balanced filling-in and the Gromov-Hausdorff limit of flat K\"{a}hler metrics on the nearby fibers.
\end{abstract}

\maketitle
\section{Introduction}
Let $X\subset\CC\PP^{\nu}$ be a  $n$-dimensional  projective variety. The embedding $X \hookrightarrow \mathbb{CP}^{\nu}$ is called balanced   if $$\int_{X} \big(\frac{z_{i}\bar{z}_{j}}{|z|^{2}}-\frac{\delta_{ij}}{\nu+1}\big)\omega_{FS}^{n}=0,  $$ where $z_{0}, \cdots, z_{\nu}$ are  the homogenous coordinates of  $\mathbb{CP}^{\nu}$, $|z|^{2}=\sum|z_{i}|^{2}$,  and $\omega_{FS}$ is the Fubini-Study metric. In  \cite{ZS},  S. Zhang  proved  that $X$,   as an algebraic cycle in $\CC\PP^\nu$,   is   Chow polystable  if and only if  the embedding  $X\subset \CC\PP^{\nu}$ can be translated to a balanced one via an  element $u\in \mathrm{SL}(\nu+1)$.

A theorem due to Donaldson \cite{Dos} shows the connection of  the balanced embedding and  the existence of K\"{a}hler metric with constant scalar curvature. More precisely, let $(X,L)$ be a polarized manifold of dimension $n$ such that the  automorphism group ${\rm Aut}(X,L)$ is finite. If there is a K\"{a}hler metric $\omega$ with constant scalar curvature representing $c_{1}(L)$, then Donaldson's theorem asserts that for $k\gg 1$, $L^{k}$ induces a balanced embedding $\Phi_{k}: X\hookrightarrow \mathbb{CP}^{\nu_{k}}$ with $\Phi_{k}^{*}\mathcal{O}_{\mathbb{CP}^{\nu_{k}}}(1)=L^{k}$. Furthermore, $$\|\omega-k^{-1}\Phi_{k}^{*}\omega_{FS}\|_{C^{r}(X)}\rightarrow 0,$$ when $k \rightarrow\infty$,   in the $C^{r}$-sense for any $r>0$.  The K\"{a}hler metric $k^{-1}\Phi_{k}^{*}\omega_{FS}$ is called a {\em balanced matric.}

  If $(X,L)$ is a polarized Calabi-Yau manifold, Yau's theorem on  the Calabi conjecture says  that there exists a unique Ricci-flat K\"{a}hler-Einstein metric $\omega$ with $\omega\in c_{1}(L)$, i.e. the Ricci curvature ${\rm Ric}(\omega)\equiv 0$  (cf. \cite{Yau1}). By Donaldson's theorem, $L^{k}$ induces a balanced embedding  $X\hookrightarrow \mathbb{CP}^{\nu_{k}}$ for $k\gg 1$, and the  Ricci-flat K\"{a}hler-Einstein metric $\omega$ can be approximated by the  balanced metrics. If $(X,L)$ is a  principally polarized  Abelian variety, it is proven first in \cite{No}  (and also independently \cite{WY}) that the standard embedding induced by the  classical theta functions of level $k$ is balanced.  Moreover, in this case the convergence of balanced metrics to the flat metric can be verified  via a  complete elementary way without quoting \cite{Dos}.

In  \cite{SYZ}, Strominger, Yau and Zaslow propose   a geometric way of  constructing mirror Calabi-Yau manifolds via dual special lagrangian fibration,  which is the celebrated  SYZ conjecture.  Later, a new version of the  SYZ conjecture is proposed by   Gross,  Wilson,  Kontsevich, and  Soibelman,
    (cf. \cite{GW,KS,KS2}) by using   the collapsing of Ricci-flat K\"{a}hler-Einstein metrics.
  Let   $(\mathcal{X} \rightarrow \Delta, \mathcal{L}) $ be   a maximal unipotent  degeneration of polarized   Calabi-Yau $n$-manifolds, i.e. the relative canonical bundle $\mathcal{K}_{\mathcal{X} / \Delta} $ is trivial, such that  $0\in \Delta$ is  a large complex limit  point, and $\omega_{t}$ be the  Ricci-flat K\"{a}hler-Einstein metric  satisfying  $\omega_{t}\in c_{1}(\mathcal{L}|_{X_{t}}) $  for $t\in \Delta^{\circ}$.
  The collapsing   version of SYZ conjecture  asserts that
       $$(X_{t}, {\rm diam}_{\omega_{t}}^{-2}(X_{t}) \omega_{t})\rightarrow (B,d_{B})$$  in the Gromov-Hausdorff sense, when $t\rightarrow 0$, where $(B,d_{B})$ is a compact metric space.
 Furthermore,  there is an open dense subset $B_{0}$ of $ B$, which is smooth, and
   is  of real dimension $n$,  and   admits    a real  affine structure. The metric $d_{B}$ is induced by a Monge-Amp\`ere  metric $g_{B}$ on $B_{0}$, i.e.  under  affine coordinates $y_{1},  \cdots, y_{n}$,  there is a potential function $\varphi$ such that $$g_{B}= \sum_{ij} \frac{ \partial^{2} \varphi}{ \partial y_{i}  \partial y_{j}} dy_{i} dy_{j},    \   \  {\rm and}  \  \   \det \Big(\frac{\partial^{2}\varphi}{ \partial y_{i}  \partial y_{j} }\Big ) ={\rm const.}.$$  Clearly  it  is true for Abelian varieties.
This conjecture is verified by Gross and Wilson for fibred  K3 surfaces with only type $I_{1}$ singular fibers  in \cite{GW}, and is studied for higher dimensional HyperK\"ahler manifolds in \cite{GTZ, GTZ2}.

 Bernd Siebert raises  a question   to  relate the balanced embeddings of $X_{t}$ to the   metric limit of rescaled  Ricci-flat K\"{a}hler-Einstein metrics $\varepsilon_{t}\omega_{t}$ for a  certain family of constants $\varepsilon_{t}$. In the Gross-Siebert program (cf. \cite{Gro,GS1}),  theta functions are constructed  on certain     degenerations of polarized  Calabi-Yau manifolds $(\mathcal{X}\rightarrow {\rm Spec}\mathbb{C}[[t]],\mathcal{L})  $  as the canonical basis of the space of sections for $\mathcal{L}$ (cf. \cite{GHKS,GS,GHKS2}), which is predicted by the Homological Mirror Symmetry conjecture.  In particular,  these theta functions recover  the classical theta functions in the case of principally polarized  Abelian varieties. If $(\mathcal{X},\mathcal{L})$ is an analytic family (cf. \cite{RS}),  Siebert asks whether  the  theta functions give the   balanced embeddings of polarized  Calabi-Yau manifolds, and furthermore whether  there is  a family version of the Donaldson's theorem for the degeneration of Calabi-Yau manifolds near large complex limits.
 In this note, we study this question in the case of  principally polarized   Abelian varieties, and establish  the connection between the limit metric $g_{B}$ and the balanced embeddings.

In Section 2,  let us recall the  basic setup for  a maximal unipotent  family of   principally polarized  Abelian varieties over the punctured  disc, and then state our  main result.  Theorem \ref{thm01} says that after a certain base  change, one can find a filling-in to complete the  family of Abelian varieties to a   degeneration,   such that  the whole   degeneration  can be simultaneously balanced embedded in  a projective space over a disc by the {\em canonical}  theta functions constructed via Gross-Siebert program.   Theorem \ref{thm02} studies the relationship    between  the balanced filling-in and the Gromov-Hausdorff limit of flat K\"{a}hler metrics on the nearby fibers.  In Section 3, we review  the construction of theta functions on degenerations of Abelian varieties  in the Gross-Siebert program.  Finally,  Theorem \ref{thm01} and Theorem \ref{thm02} are proved in Section 4.

\noindent {\bf Acknowledgements:}  The authors  would  like to thank Helge Ruddat, Mark Gross and Bernd Siebert for many helpful discussions.  The work was done when both authors are  in residence at the Mathematical Sciences Research Institute in Berkeley, California, during the 2016 Spring semester.

\section{Set up and Main Theorems}
In this paper, we always denote
   $M\cong \mathbb{Z}^{n}$,   $M_{\mathbb{R}}=M\otimes_{\mathbb{Z}}\mathbb{R}$,  $N=\Hom_{ \mathbb{Z}}(M, \mathbb{Z})$,   $N_{\mathbb{R}}=N\otimes_{\mathbb{Z}}\mathbb{R}$, $T_{\mathbb{C}}=N\otimes_{\mathbb{Z}}\mathbb{C}^{*}$, and $\langle\cdot,\cdot\rangle$ the pairing between $N_{\mathbb{R}}$ and $M_{\mathbb{R}}$.

\subsection{A family of Abelian varieties}\label{ppav}
This subsection gives the basic setup of this paper, which is  a  family of Abelian varieties over  the punctured disc approaching to  a large complex limit.

   Let $Z(\cdot, \cdot): M_{\mathbb{R}} \times M_{\mathbb{R}} \rightarrow \mathbb{R}$  be a  %symmetric
   positive  definite   bilinear  form satisfying  $Z(M,M)\subset \mathbb{Z}$.  If  we define the quadratic  function
 \begin{equation}\label{z-phi}
 \overline{\varphi}(y)=\frac{1}{2}Z(y,y)
 \end{equation}
 on $M_{\mathbb{R}}$, and the affine  linear function
     \begin{equation}\label{e0.0}\alpha_{\gamma}(\cdot)= Z(\gamma, \cdot)+\frac{1}{2} Z(\gamma, \gamma),
     \end{equation}
 for any $\gamma \in M$,    then  \begin{equation}\label{e0.1} \overline{\varphi}(y+\gamma)= \overline{\varphi} (y) + \alpha_{\gamma} (y).\end{equation}
      The couple $\{M, Z\}$  determines  a family of principally polarized Abelian varieties over the punctured  disc  $(\pi: \mathcal{X}_{\eta}\rightarrow \Delta^{\circ}, \mathcal{L}_{\eta})$ as the following.

  We define an $M$-action  on  $M\times \mathbb{Z}$ via $(m,r) \mapsto (m,r+  Z(\gamma,m))$ for any $\gamma\in M$, which induces an $M$-action on $ T_{\mathbb{C}} $   by
        \begin{equation}\label{e0.100}\mathcal{Z}^{m} \mapsto \mathcal{Z}^{m} s^{Z(\gamma, m)},  \  \  \   \gamma\in M,   \end{equation} for any $s\in \Delta^{\circ}$.  More explicitly,
let
      $$e_{1}, \cdots, e_{n}\in M$$
       be a  basis of $M$,  we have coordinates  $z_{1}= \mathcal{Z}^{e_{1}}, \cdots, z_{n}=\mathcal{Z}^{e_{n}}$ on $T_{\mathbb{C}}\cong (\mathbb{C}^\times)^{n}$, and   the $M$-action    is  that $ (z_{1}, \cdots, z_{n}) \mapsto (z_{1} s^{Z(\gamma, e_{1})},  \cdots, z_{n} s^{Z(\gamma, e_{n})}) $  for any $\gamma\in M$.     We claim that the quotient $X_{s}=T_{\mathbb{C}}/M$ is a principally  polarized Abelian variety with period matrix $$\left[ I,   \frac{\log s}{2\pi \sqrt{-1}}Z_{ij} \right]$$ where $Z_{ij}=Z(e_{i},e_{j}) \in \mathbb{Z} $.

      Denote $e_{1}^{*}, \cdots, e_{n}^{*}$ the dual basis of $N$, and  $N_{\mathbb{C}}=N\times_{\mathbb{Z}}\mathbb{C} \cong \mathbb{C}^{n}$. We have a natural embedding $N\hookrightarrow  N_{\mathbb{C}} $ as the real part, and    by abusing notions,  we regard $e_{1}^{*}, \cdots, e_{n}^{*}$ as a $\mathbb{C}$-basis of $N_{\mathbb{C}}$.
    The universal covering  $N_{\mathbb{C}} \rightarrow T_{\mathbb{C}} $ is given by $ w_{i} \mapsto z_{i}= \exp 2\pi \sqrt{-1} w_{i}  $, $i=1, \cdots, n$, where $w_{1}, \cdots, w_{n}$ are coordinates of $N_{\mathbb{C}}$ respecting to $e_{1}^{*}, \cdots, e_{n}^{*}$.   Then
        $T_{\mathbb{C}}\cong N_{\mathbb{C}}/N$ where $N$ acts on $N_{\mathbb{C}}$ given  by $ w_{i} \mapsto  w_{i} + \langle \mu, e_{i}\rangle=  w_{i}+ \mu_{i}$ for any $\mu=\sum\limits_{i}\mu_{i}e_{i}^{*}\in N$. We have   an $N \times M$-action on     $\mathbb{C}^{n}$  by $$w_{i} \mapsto w_{i} +\langle \mu, e_{i}\rangle +  \frac{\log s}{2\pi \sqrt{-1}}Z(\gamma, e_{i}), $$ for any  $(\mu, \gamma)\in N\times M$, and we obtain  $X_{s}=T_{\mathbb{C}}/M=\mathbb{C}^{n}/\Lambda_{s}$, where the lattice $\Lambda_{s}={\rm span}_{\mathbb{Z}} \{e_{1}^{*}, \cdots, e_{n}^{*},    \frac{\log s}{2\pi \sqrt{-1}}Z( e_{1}, \cdot)   , \cdots,    \frac{\log s}{2\pi \sqrt{-1}}Z( e_{n}, \cdot)\}$.     Furthermore,  we construct  a family of Abelian varieties $\mathcal{X}_{\eta}=(T_{\mathbb{C}} \times \Delta^{\circ} )/M\rightarrow \Delta^{\circ}$ over the punctured disc $\Delta^{\circ}$ with fiber $X_{s}$.

        We extend the $N \times M$-action on     $\mathbb{C}^{n}$ to  $\mathbb{C}^{n}\times \mathbb{C}$ by $$(w, \lambda)  \mapsto (w + \mu +  \frac{\log s}{2\pi \sqrt{-1}}Z(\gamma, \cdot), \lambda \exp \pi\sqrt{-1}(- \frac{\log s}{2\pi \sqrt{-1}}Z(\gamma,\gamma)-2 \langle w, \gamma \rangle)),  $$ for any  $(\mu, \gamma)\in N\times M$, where $w=(w_{1}, \cdots, w_{n})\in \mathbb{C}^{n}$. The quotient $(\mathbb{C}^{n}\times \mathbb{C})/(N \times M)$ is the relative ample bundle $\mathcal{L}_{\eta}$.  The classical Riemann theta function (cf. \cite{BL})
$$
 \vartheta=\sum_{\gamma\in M}\exp \pi \sqrt{-1}\left( \frac{\log s}{2\pi \sqrt{-1}}Z(\gamma, \gamma)+ 2 \langle w,\gamma \rangle\right)$$
 is the distinguished section of $\mathcal{L}_{\eta}$.
On any $X_{s}$, the first Chern class $c_{1}(\mathcal{L}_{\eta}|_{X_{s}})$ is represented by the flat K\"{a}hler metric \begin{equation}\label{e0.10}\omega_{s}  =\frac{-\pi \sqrt{-1}}{ \log |s|}\sum_{ij} Z ^{ij} dw_{i}\wedge d\bar w_{j},    \end{equation} where $ Z ^{ij}$ denotes the inverse of the matrix $Z_{ij}=Z(e_{i},e_{j}) $.

 Let $x_{1}, \cdots, x_{n}, y_{1}, \cdots, y_{n}$ denote the  coordinates on $N_{\mathbb{C}}$  with respect to  the basis $e_{i}^{*}$, $1\leqslant i\leqslant n$,    $ (  \frac{\log |s|}{2\pi \sqrt{-1}}+ \arg (s))Z( e_{j}, \cdot)   $,   $1\leqslant j\leqslant n$, where    $0\leqslant \arg(s) <2\pi$.  We also  regard   $ y_{1}, \cdots, y_{n}$ (resp.  $x_{1}, \cdots, x_{n}$)    as coordinates on $M_{\mathbb{R}} $    (resp.  $N_{\mathbb{R}} $) respecting to $e_{1}, \cdots, e_{n}$.
Then
$$w_{i}=x_{i}+(\arg (s)-\sqrt{-1}\frac{\log |s|}{2\pi }  )\sum_{j=1}^nZ_{ij}y_{j},\  i=1, \cdots, n,$$ and
 as a symplectic form,   $$\omega_{s}=\sum\limits_{ij}dx_{i} \wedge dy_{j}.   $$ The  corresponding  Riemannian metric is  $$g_{s}=\frac{-2\pi}{\log |s|}\sum_{i,j} Z^{ij}dx_{i}dx_{j}-\frac{\log |s|}{2\pi} \sum_{i,j} Z_{ij}dy_{i}dy_{j},   $$ and,   when $s  \rightarrow 0$,  $$(X_{s},  \frac{-2\pi}{\log |s|}g_{s})     \longrightarrow (B, g_{B}=    \sum_{ij} Z_{ij}dy_{i}dy_{j})$$ in the Gromov-Hausdorff topology,  where  $B=M_{\mathbb{R}}/M$.

 We can  regard $\overline{\varphi}$ as a multivalue  function on $B=M_{\mathbb{R}}/M $ by (\ref{e0.1}),  and on any small open subset on $B$,  the difference of any two sheets of $\overline{\varphi}$ is a linear function defined in (\ref{e0.0}).  Note that the Hessian matrix of $\overline{\varphi}$ is well-defined, and $\frac{\partial^{2} \overline{\varphi}}{\partial y_{i} \partial y_{j}}=Z_{ij}$.
    The Riemannian metric $g_{B}$ is the Monge-Amp\`{e}re metric with potential $\overline{\varphi}$ respecting to the affine coordinates $y_{1}, \cdots, y_{n}$, i.e. \begin{equation}\label{e0.11}g_{B}=    \sum_{ij} \frac{\partial^{2} \overline{\varphi}}{\partial y_{i} \partial y_{j}}dy_{i}dy_{j}, \  \ {\rm and} \  \  \det ( \frac{\partial^{2} \overline{\varphi}}{\partial y_{i} \partial y_{j}}) \equiv  \det(Z_{ij}).  \end{equation}

\subsection{Main results}
 For any $k\gg 1$, let $M_{k}=kM$,   and  $\varphi$ be an   $M_{k}$-periodic convex piecewise linear function such that the  slopes   of $\varphi$ are in $N$, and   \begin{equation}\label{e0.2}\varphi(y+\gamma)= \varphi (y) + \alpha_{\gamma} (y) \end{equation}  for any $\gamma\in kM$,  which induces  an  $M_{k}$-invariant rational  polyhedral decomposition $\mathcal{P} $ of $M_{\mathbb{R}}$, i.e.
   $\sigma $ is a cell of $\mathcal{P} $ if and only if $\varphi$ is linear on $\sigma$.

 The Mumford's construction (cf. \cite{Mum}) gives a  degeneration of principally polarized   Abelian varieties  $(\pi: \mathcal{X}\rightarrow \Delta, \mathcal{L})$ from the data $(M_{k},\mathcal{P}, \varphi) $ such that  $ \pi: \mathcal{X}_{\Delta^{\circ}}\rightarrow \Delta^{\circ}$ is the base  change of $ \mathcal{X}_{\eta}$ via $t\mapsto t^{k}=s$, where $ \mathcal{X}_{\Delta^{\circ}}=\pi^{-1}(\Delta^{\circ})$,  and $\mathcal{L}|_{\mathcal{X}_{\Delta^{\circ}}}$ is the pull-back of  $\mathcal{L}_{\eta}^{k}$. The central fiber $X_{0}=\pi^{-1}(0)$ is reduced and reducible with only  toric singularities.   The intersection complex of  $X_{0}$ is $(B_{k}, \tilde{\mathcal{P}})$ where $B_{k}=M_{\mathbb{R}}/M_{k}$ and $\tilde{\mathcal{P}}$ is the quotient  rational polyhedron decomposition  of   $\mathcal{P}$.  The irreducible components are one to one corresponding to $n$-cells in $\tilde{\mathcal{P}}$, and for any $n$-cells  $\sigma\in \tilde{\mathcal{P}}$, the respective irreducible component is the toric variety $X_{\sigma}$ defined by $\sigma$, where we regard $\sigma$ as a polytope in $M_{\mathbb{R}}$. Furthermore,  the restriction of  $\mathcal{L}$ on $X_{\sigma}$ is the toric ample line bundle  defined by $\sigma$.

   For any $m\in B_{k}(\mathbb{Z})=M/M_{k}$,  a section $\vartheta_{m}$ of $\mathcal{L}$ is constructed in Section 6 of  \cite{GHKS} such that the restriction of $ t^{\varphi(m)-\bar{\varphi} (m)}\vartheta_{m}$ on any $X_{t}=\pi^{-1}(t)$, $t\neq0$, is a classical Riemann  theta function (See also \cite{GS}). And the restriction of  $\vartheta_{m}$ on any component $X_{\sigma}$ of $X_{0}$ is a monomial section of $\mathcal{L}|_{X_{\sigma}} $.

 Note that the choice of $(\mathcal{P}, \varphi)$  is not unique, and different choices  give  different filling-ins  $X_{0}$. However there is a canonical one studied in \cite{AN},  which  satisfies $\varphi (m)=\bar{\varphi} (m)$ for any $m\in M$. More precisely, let $\varphi: M_{\mathbb{R}}   \rightarrow \mathbb{R}$ be the convex   piecewise linear function such that     the graph of $\varphi$ is the lower bound of the convex hull of $     \{(m, \overline{\varphi}(m))| m\in M \}\subset M_{\mathbb{R}}\times \mathbb{R}$, and let  $\mathcal{P}$ be the rational  polyhedral decomposition of $ M_{\mathbb{R}}$ induced by $\varphi$, i.e. a cell $\sigma\in\mathcal{P} $ if and only if $\varphi$ is linear on $\sigma$. It is clear that (\ref{e0.2}) is satisfied, and   \begin{equation}\label{enew01} \varphi(m)= \overline{\varphi}(m),  \  \  \  \  {\rm for \ \ any} \ m\in M.  \end{equation}
Let $\overline{\cP}$ be the boundary  of the convex hull of the lattice points on the graph of $\overline{\varphi}$, then the polyhedral decomposition $\cP$ is obtained in such a way that each cell of $\cP$ is precisely the projection of a face of $\overline{\cP}$ onto $M_\RR$.
The decomposition $\mathcal{P}$ is the  {\em mostly divided}  polyhedral decomposition  that one can have.  Any cell $\sigma$ of $\mathcal{P}$ intersects with the lattice $M$ only  at its vertices, i.e. there is no integral point in the interior of $\sigma$.

 We further assume that for any $\sigma\in\mathcal{P}$,  the slop of $\varphi|_{\sigma} $ is integral, i.e.
 \begin{equation}\label{e0.3}
d\varphi|_{\sigma} \in N \end{equation}
   for simplicity. It is not a further  restriction, and the reason is as the following. Note that $d\varphi|_{\sigma} \in N\otimes_{\mathbb{Z}}\mathbb{Q}$. For any $m\in M$,  there is a   $\nu \in \mathbb{N}$   such that  $\nu d\varphi|_{\sigma} \in N$ for any cell $\sigma\in\mathcal{P}$ with $m\in \sigma$, i.e. $\sigma$ belongs to  the star of $m$.  By the $M$-action and (\ref{e0.2}),   $\nu d\varphi|_{\sigma}+ \nu d\alpha_{\gamma} \in N$ for any $\gamma\in M$, and thus $\nu \varphi$ satisfies (\ref{e0.3}).
   If we replace $Z $ by $\nu Z $, then
     the  new family of Abelian varieties constructed from $\nu\overline{\varphi}$ is the  base change of the original family by $s \mapsto s^{\nu}$.
   Hence we assume (\ref{e0.3}).

  Before we present  the main theorems,  we look at some lower dimensional cases.
  If  $\dim_{\mathbb{R}}M_{\mathbb{R}}=1$,  $B_{k}$ is a cycle, $1$-cells in $\mathcal{P} $ are intervals, and  $X_{0}$ is the Kodaira type I$_{k}$ fiber, i.e.  a cycle of $k$ rational curves. It is well known to experts that
  $X_{0}$ can be balanced embedded.
   When $\dim_{\mathbb{R}}M_{\mathbb{R}}=2$,    there are only two  possible choices of $\mathcal{P} $ by \cite{AN}. One is that any $2$-cell in $\mathcal{P} $ is a standard simplex, and the other one is that  $\mathcal{P} $ consists of  cubes.   Hence $X_{0}$ consists either finite many $\mathbb{CP}^{2}$ or finite many $\mathbb{CP}^{1}\times \mathbb{CP}^{1}$.

 \begin{center}
  \setlength{\unitlength}{0.5cm}
\begin{picture}(3.3,2.6)(-2.5,-0.25)
\linethickness{.075mm}
\put(-2,0.5){\line(0,1){1}}
\put(-2,1.5){\line(1,1){1}}
\put(-2,0.5){\line(1,0){1}}
\put(-1,0.5){\line(1,1){1}}
\put(-1,2.5){\line(1,0){1}}
\put(0,1.5){\line(0,1){1}}
\end{picture}
   \setlength{\unitlength}{0.5cm}
\begin{picture}(4.3,3.6)(-2.5,-0.25)
\linethickness{.075mm}
\multiput(-2,0)(1,0){4}
{\line(0,1){3}}
\multiput(-2,0)(0,1){4}
{\line(1,0){3}}
\end{picture}
\setlength{\unitlength}{0.5cm}
\begin{picture}(4.3,3.6)(-2.5,-0.25)
\linethickness{.075mm}
\multiput(-2,0)(1,0){4}
{\line(0,1){3}}
\multiput(-2,0)(0,1){4}
{\line(1,0){3}}
\put(0,0){\line(1,1){1}}
\put(-1,0){\line(1,1){2}}
\put(-2,0){\line(1,1){3}}
\put(-2,1){\line(1,1){2}}
\put(-2,2){\line(1,1){1}}
\end{picture}\end{center}

   The first result of this paper shows that the embedding of  the canonical degeneration $\mathcal{X}$ of \cite{AN} by  theta functions constructed in \cite{GHKS} is balanced.

 \begin{theorem}\label{thm01}  For any $k\gg 1$,  $B_{k}=M_{\mathbb{R}}/M_{k}$,  $B_{k}(\mathbb{Z})=M/M_{k}$, and let $(\pi: \mathcal{X}\rightarrow \Delta, \mathcal{L})$ be the  degeneration of principally polarized   Abelian varieties  from the triple  $(M_{k},\mathcal{P}, \varphi) $.  The theta functions $\{\vartheta_{m}| m\in B_{k}(\mathbb{Z})\}$ define a relative balanced embedding $$ \Phi_{ k} =[\vartheta_{m_{1}}, \cdots, \vartheta_{m_{k^{n}}}]: \mathcal{X}\rightarrow \mathbb{CP}^{k^{n}-1}\times \Delta,$$ i.e. for any $t\in \Delta$, $\Phi_{ k}|_{X_{t}}:X_{t}\hookrightarrow \mathbb{CP}^{k^{n}-1}$ is a balanced embedding.
  \end{theorem}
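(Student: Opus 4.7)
The plan is to establish the balanced condition first on the smooth fibers by invoking the classical theta function result for Abelian varieties, and then to propagate it across $t=0$ via a continuity argument on fiber integrals.

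For $t\in\Delta^{\circ}$ the fiber $X_{t}$ is a principally polarized Abelian variety and $\mathcal{L}|_{X_{t}}$ is the $k$-th power of the principal polarization. Because the canonical triple $(\mathcal{P},\varphi)$ satisfies $\varphi(m)=\overline{\varphi}(m)$ for every $m\in M$ (equation (\ref{enew01})), the weight $t^{\varphi(m)-\overline{\varphi}(m)}$ is trivial for every lattice representative of a class $m\in B_{k}(\mathbb{Z})$. In view of the construction of $\vartheta_{m}$ recalled in Section 3, this identifies $\{\vartheta_{m}|_{X_{t}}\}_{m\in M/M_{k}}$, up to an overall scalar independent of $m$, with the standard basis of classical level-$k$ Riemann theta functions on $X_{t}$. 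Hence $\Phi_{k}|_{X_{t}}$ is projectively equivalent to the standard level-$k$ theta embedding, which by \cite{No} (and independently \cite{WY}) is balanced.

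To pass to the singular central fiber, for each pair $m,n\in B_{k}(\mathbb{Z})$ define
\[
I_{mn}(t)=\int_{X_{t}}\left(\frac{\vartheta_{m}\,\overline{\vartheta_{n}}}{\sum_{\ell}|\vartheta_{\ell}|^{2}}-\frac{\delta_{mn}}{k^{n}}\right)\omega_{FS}^{n},
\]
so that $\Phi_{k}|_{X_{t}}$ is balanced precisely when $I_{mn}(t)=0$ for all $m,n$. The integrand is a smooth function on $\mathbb{CP}^{k^{n}-1}$, and its pullback to $\mathcal{X}$ is continuous. Because $\pi:\mathcal{X}\to\Delta$ is flat and proper with reduced fibers (by Mumford's construction recalled in the previous subsection), the currents of integration $[X_{t}]$ vary weakly continuously in $t$, and consequently $I_{mn}$ is continuous on $\Delta$. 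Combined with the smooth-fiber case $I_{mn}(t)=0$ for $t\neq 0$, this forces $I_{mn}(0)=0$, which is exactly the balanced condition for $\Phi_{k}|_{X_{0}}$.

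The main technical point is the weak continuity $[X_{t}]\to[X_{0}]$ at $t=0$, which is nontrivial because $X_{0}$ is singular (though reduced and of the expected dimension). Away from the toric singular locus of $X_{0}$, $\pi$ is a submersion and the continuity is standard. Near each stratum of toric singularities the family is locally modeled, after \'etale base change, by a monomial equation of the form $z_{1}\cdots z_{r}=t^{a}$ in a toric chart, and the weak convergence of the corresponding currents of integration follows from a direct local computation. An alternative approach is to exploit the toric symmetry of each irreducible component $X_{\sigma}\subset X_{0}$, on which each $\vartheta_{m}$ restricts to a monomial section; this allows $I_{mn}(0)$ to be expanded as a sum over the $n$-cells $\sigma\in\tilde{\mathcal{P}}$ of moment-map integrals over lattice polytopes, yielding a combinatorial verification that is independent of the continuity argument.
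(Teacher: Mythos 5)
Your proof is correct and, while it arrives at the same two-step structure (smooth fibers first, then a continuity argument at $t=0$), it takes a genuinely different route on the first step. The paper does not cite \cite{No} or \cite{WY} for the smooth fibers: it proves that the finite Heisenberg group $\HH_k$ acts on $H^0(X_t,\sL|_{X_t})$ irreducibly (Lemma \ref{le01}) and, by the displayed norm computations in Section 4, unitarily for the Fubini--Study $L^2$-pairing, so the Hermitian matrix $\bigl(\int_{X_t}\vth_m\bar\vth_{m'}(\sum_\ell|\vth_\ell|^2)^{-1}\omega_{\FS}^n\bigr)$ intertwines an irreducible unitary representation and must be a multiple of the identity. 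Your shortcut of quoting the classical level-$k$ result is legitimate, but only because Section 3 identifies $\vth_m|_{X_t}$ \emph{on the nose} with the classical theta function with characteristic $m$ (the factor $t^{\varphi(m)-\bar\varphi(m)}$ is $1$ for the canonical $\varphi$ by (\ref{enew01})); be careful that ``projectively equivalent'' alone would not suffice, since balancedness is a $\mathrm{U}(k^n)$-invariant but not a $\mathrm{PGL}$-invariant notion --- what saves you is that the bases agree up to one common scalar, so the embeddings and the balance integrands literally coincide. The paper's self-contained group argument additionally produces the finite unitary symmetry of the image $\Phi_k(X_t)$ that feeds into the Appendix's general criterion. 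On the second step the two arguments are the same idea in different language: the paper extends the holomorphic family of Chow points across $t=0$ and uses continuity of the $\mathrm{SU}(k^n)$-moment map, which is precisely your weak convergence $[X_t]\rightharpoonup[X_0]$ (valid since $\pi$ is flat and proper and $X_0$ is reduced) tested against the fixed smooth forms $(z_m\bar z_{m'}|z|^{-2}-\delta_{mm'}k^{-n})\,\omega_{\FS}^n$; your proposed direct combinatorial evaluation of $I_{mm'}(0)$ over the toric components is essentially what the paper's Appendix carries out in the simplicial case.
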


Again it was proven in \cite{WY} and \cite{No} that  any individual Abelian variety  $X_{t}$, $t\neq0$, can be balanced embedded in certain $ \mathbb{CP}^{N}$ via theta functions.  If one set  $t\rightarrow 0$, the limit variety $X_{0}$ in the projective space is also balanced. However Theorem \ref{thm01} uses a group of different theta functions that guarantee the balanced embedding varying  holomorphicly    when $t$ approaches to $0$. Furthermore, Theorem \ref{thm01} identifies  the balanced limit  $X_{0}$ to be  the canonical filling-in of $ ( \mathcal{X}_{\Delta^{\circ}}, \mathcal{L}|_{\mathcal{X}_{\Delta^{\circ}}})$ constructed in \cite{AN}.

Now we study the connection   between the metric limit  $(B, g_{B})$ and the balanced filling-in when $k\rightarrow \infty$.  For a fixed $k\gg 1$, $B_{k}$ has a natural affine structure induced by  $M_{\mathbb{R}}$.  If $f$ is a convex function on an open subset $U$ of $B_{k}$ with  respect  to the affine structure, where  we also regard $U$ as a subset of $M_{\mathbb{R}}$ by the quotient, then for any $y_{0}\in U$, we let $$ \partial f (y_{0}) =\{\upsilon \in N_{\mathbb{R}}| f(y)\geq \langle \upsilon, y-y_{0} \rangle + f(y_{0}) \  \ {\rm  for \ \  all}  \  \ y\in M_{\mathbb{R}} \},  $$  and we define the  Monge-Amp\`{e}re measure $${\rm MA}(f)(E)={\rm Vol}(\bigcup_{y\in E} \partial f (y)),$$ for any Borel subset $E\subset U$, where ${\rm Vol}$ denotes the standard Euclidean measure on  $N_{\mathbb{R}}$ (cf. \cite{BPS} and \cite{TW}).  It is well known  that ${\rm MA}(f+\alpha)={\rm MA}(f)$ for any linear function $\alpha$ on $M_{\mathbb{R}}$, and if $f$ is smooth, $${\rm MA}(f)=\det \big(\frac{\partial^{2}f}{\partial y_{i} \partial y_{j}}\big)dy_{1}\wedge \cdots \wedge dy_{n}. $$

Note that we can regard the function $\varphi$ as a multivalue function on $B_{k}$, and by (\ref{e0.2}), the difference between  any  two sheets of $\varphi$ is a linear function. Thus we have a well-defined Monge-Amp\`{e}re measure  ${\rm MA}(\varphi)$.   The next theorem shows that  after some rescaling, this Monge-Amp\`{e}re measure converges to  the Monge-Amp\`{e}re measure of the potential function $\bar{\varphi}$ of   $g_{B}$  when $k\rightarrow \infty$, and furthermore, the rescaled  potential function $\varphi$  also converges to $\bar{\varphi}$ in the $C^{0}$-sense, which shows the link between the balanced embeddings and the Gromov-Hausdorff limit.

\begin{theorem}\label{thm02} For any $k \gg 1$, the Monge-Amp\`{e}re measure of $\varphi$ is $${\rm MA}(\varphi)=\det (Z_{ij}) \sum_{m\in B_{k}(\mathbb{Z})}\delta_{m},  $$ where $\delta_{m}$ is the Dirac measure  at $m\in B_{k}(\mathbb{Z})$.
If $\chi_{k}: B\rightarrow B_{k}$ is induced by the dilation $y_{i} \mapsto k y_{i}$, $i=1, \cdots, n$, of $M_{\mathbb{R}}$, then $\frac{1}{k^{2}} \chi_{k}^{*}\varphi-  \overline{\varphi}$ is a well-defined function on $B$, and
$$\sup_{B} \left |\frac{1}{k^{2}} \chi_{k}^{*}\varphi- \overline{\varphi}\right | \rightarrow 0, $$
when $k\rightarrow\infty$.  Furthermore,   $$  \frac{1}{k^{n}} \chi_{k}^{*}{\rm MA}(\varphi) \rightharpoonup {\rm MA}(\bar{\varphi})=\det(Z_{ij})dy_{1}\wedge \cdots \wedge dy_{n}$$ in the weak sense.
  \end{theorem}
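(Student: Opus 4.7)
The plan is to prove the three assertions in sequence, with the Monge--Amp\`ere mass calculation doing the main work.

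For the formula ${\rm MA}(\varphi) = \det(Z_{ij})\sum_{m\in B_k(\ZZ)}\delta_m$: since $\varphi$ is convex and piecewise linear on $\mathcal{P}$, whose vertex set is exactly $M$ (cells of $\mathcal{P}$ contain no interior lattice points by the canonical construction), the Monge--Amp\`ere measure is supported on $M$, and the equivariance (\ref{e0.2}) translates the subdifferential $\partial\varphi(m)$ to $\partial\varphi(0) + Z(m,\cdot)$, so the vertex masses all coincide. It remains to compute $c_0 := {\rm Vol}(\partial\varphi(0))$. Using $\varphi(m) = \overline{\varphi}(m)$ on $M$ together with the fact that $\varphi$ is the lower convex hull of $\{(m,\overline{\varphi}(m)): m\in M\}$, a short convexity argument identifies
\begin{equation*}
\partial\varphi(0) = \{v \in N_\RR : \langle v,\gamma \rangle \leq \tfrac{1}{2}Z(\gamma,\gamma) \text{ for all } \gamma \in M\}.
\end{equation*}
Writing $v = Z(v^\#, \cdot)$ under the isomorphism $Z: M_\RR \to N_\RR$ (Jacobian $\det(Z_{ij})$) and completing the square turns this condition into $Z(\gamma - v^\#, \gamma - v^\#) \geq Z(v^\#, v^\#)$ for all $\gamma \in M$, i.e., $v^\#$ lies in the Voronoi cell $V_0$ of $0$ for $M$ in the $Z$-metric. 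Since these Voronoi cells are Euclidean translates under $M$ and tile $M_\RR$, ${\rm Vol}(V_0) = 1$ in the $e_i$-basis, and hence $c_0 = {\rm Vol}(Z(V_0)) = \det(Z_{ij})$.

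For the $C^0$-convergence of $\frac{1}{k^2}\chi_k^*\varphi - \overline{\varphi}$: well-definedness on $B$ follows from the identity $\varphi(k(y+\gamma)) - \varphi(ky) = \alpha_{k\gamma}(ky) = k^2\alpha_\gamma(y)$ for $\gamma \in M$, matching the $M$-transformation of $k^2\overline{\varphi}$. The inequality $\varphi \geq \overline{\varphi}$ (from the definition of $\varphi$ as the lower convex hull of a convex function restricted to $M$) combined with the homogeneity $\overline{\varphi}(ky) = k^2\overline{\varphi}(y)$ yields the lower bound $\frac{1}{k^2}\chi_k^*\varphi \geq \overline{\varphi}$. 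For the upper bound, on any cell $\sigma \in \mathcal{P}$ with lattice vertices $v_0, \ldots, v_p$, writing $y = \sum \lambda_i v_i$ and using $\varphi(y) = \sum\lambda_i \overline{\varphi}(v_i)$ together with the quadratic form $\overline\varphi$, a direct computation gives
\begin{equation*}
\varphi(y) - \overline{\varphi}(y) = \tfrac{1}{4}\sum_{i,j} \lambda_i \lambda_j Z(v_i - v_j, v_i - v_j) \leq \tfrac{1}{4} \operatorname{diam}_Z(\sigma)^2.
\end{equation*}
Because the canonical construction makes $\mathcal{P}$ itself $M$-invariant (since $\overline\varphi$ is $M$-quasi-periodic), there are only finitely many $M$-orbits of cells, so this bound is uniform by some constant $C$. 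Scaling, $0 \leq \frac{1}{k^2}\chi_k^*\varphi - \overline{\varphi} \leq C/k^2 \to 0$.

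Finally, the weak convergence of $\frac{1}{k^n}\chi_k^*{\rm MA}(\varphi)$ is immediate from step one. The diffeomorphism $\chi_k : B \to B_k$ sends $m/k \in B$ to $m \in B_k(\ZZ)$, so as measures $\chi_k^*\delta_m = \delta_{m/k}$ and
\begin{equation*}
\tfrac{1}{k^n}\chi_k^*{\rm MA}(\varphi) = \tfrac{\det(Z_{ij})}{k^n}\sum_{p \in \frac{1}{k}M/M}\delta_p.
\end{equation*}
For continuous $f$ on $B$, pairing yields a Riemann sum over the uniform $k^{-1}$-spaced grid $\tfrac{1}{k}M/M \subset B$, which converges to $\det(Z_{ij})\int_B f\,dy_1 \cdots dy_n$. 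I expect the first step to be the main obstacle, as it requires correctly identifying the subgradient polytope as a linear image of a $Z$-metric Voronoi cell and tracking the Jacobian; the second step is an elementary convex-analytic estimate once the $M$-periodicity of $\mathcal{P}$ is invoked, and the third is then a Riemann sum.
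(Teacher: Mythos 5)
Your proposal is correct, and parts two and three coincide with the paper's argument (well-definedness from the quasi-periodicity identity, a uniform bound on $\varphi-\overline{\varphi}$ divided by $k^{2}$, and a Riemann-sum computation for the weak convergence). The genuine difference is in the first assertion. The paper invokes the dual-polytope description of the Monge--Amp\`ere mass of a piecewise linear convex function (Proposition 2.7.4 of \cite{BPS}): the mass at $m$ is ${\rm Vol}(\check{m})$, where $\check{m}$ is cut out of $N_{\mathbb{R}}\times\{1\}$ by the normal fan of $\Delta_{\varphi}$; it then observes that the $M$-action $\check{T}^{0}$ permutes these cells, so all masses equal a common value $V$, and determines $V$ by a tiling argument --- the $k^{n}$ congruent cells fill a fundamental domain $D$ of the $M_{k}$-action on $N_{\mathbb{R}}\times\{1\}$ of volume $k^{n}\det(Z_{ij})$, whence $V=\det(Z_{ij})$. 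You instead compute the subgradient set $\partial\varphi(0)$ directly: the lower-convex-hull description of $\varphi$ reduces membership in $\partial\varphi(0)$ to the lattice inequalities $\langle v,\gamma\rangle\leq\frac{1}{2}Z(\gamma,\gamma)$, and completing the square identifies $\partial\varphi(0)$ with the image under $Z:M_{\mathbb{R}}\to N_{\mathbb{R}}$ of the Voronoi cell of $0$ for $M$ in the $Z$-metric, giving volume $\det(Z_{ij})$ since Voronoi cells tile $M_{\mathbb{R}}$ with unit covolume. Both routes ultimately rest on a tiling of a fundamental domain by congruent translates; yours buys an explicit geometric identification of the subgradient polytope (and is self-contained modulo the definition of ${\rm MA}$ already given in Section 2), while the paper's avoids the change of variables and completing-the-square computation by working with the total volume at once. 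Your explicit bound $\varphi-\overline{\varphi}\leq\frac{1}{4}\operatorname{diam}_{Z}(\sigma)^{2}$ is also a sharpening of the paper's mere finiteness of $\sup_{M_{\mathbb{R}}}|\varphi-\overline{\varphi}|$, though the latter suffices. No gaps.
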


  In \cite{Liu}, the non-archimedean  Monge-Amp\`{e}re equation is  solved for degenerations of Abelian varieties, where the approximation of continuous  potential functions by piecewise linear functions  are also used. See \cite{BFJ} for the  non-archimedean  Monge-Amp\`{e}re equations for   more general cases. Here in Theorem \ref{thm02},  we are working  on the intersection complexes  instead of the dual intersection complexes as in \cite{BFJ,Liu}, and our piecewise linear functions are from the balanced embedded degenerations of Abelian varieties.

  We end this section by giving  a remark of Calabi-Yau manifolds balanced embedded by theta functions.
 In  Section 3 of  \cite{DKLR}, it is shown  that the Calabi-Yau hypersurfaces $$X_{t}=\{ [z_{0}, \cdots, z_{4}]\in \mathbb{CP}^{n} |t( z_{0}^{n+1}+ \cdots + z_{n}^{n+1})+ z_{0} \cdots z_{n}=0\}  $$ are balanced embedded for any $t\in \mathbb{C}$.  The proof involves two finite group actions on $(X_{t}, \mathcal{O}_{\mathbb{CP}^{n}}(1)|_{X_{t}})$. The first one is  $Ab_{n+1}=\{(a_{0}, \cdots, a_{n})|  a_{i} \in \mathbb{Z}_{n+1},  a_{0}+ \cdots+ a_{n}=0\}/ \mathbb{Z}_{n+1}$,  which  acts on $X_{t}$ by $$(z_{0}, \cdots, z_{n}) \mapsto (\zeta^{a_{0}}z_{0}, \cdots, \zeta^{a_{n}}z_{n})$$  where $\zeta=\exp \frac{2\pi \sqrt{-1}} {n+1}$. The second one is the symmetric group $ \mathbb{S}_{n+1}$ on $n+1$ elements, which acts on $X_{t}$ by translating $z_{0}, \cdots, z_{n}$. The same argument as in the proof of Theorem \ref{thm01} also shows the result of $X_{t}$ being balanced.
  On the other hand,  $z_{0}, \cdots, z_{n}$ as sections of $ \mathcal{O}_{\mathbb{CP}^{n}}(1)|_{X_{t}}$ are theta functions constructed in the Gross-Siebert program for $|t|\ll 1$ (at least for $n=3$) by Example 6.3 in   \cite{GHKS}, and thus $X_{t}$ are balanced embedded by theta functions.

\section{Construction of theta functions}
We recall the construction of theta functions on degenerations of Abelian varieties, and we   follow the arguments in  Example 6.1 of  \cite{GHKS} and Section 2 of  \cite{GS} closely. See also \cite{ABC} for the elliptic curve case.

 Let $(\mathcal{P}, \varphi)$ be the same as in the above section, i.e.
      the graph of $\varphi$ is the lower  boundary  of the upper  convex hull
      $$\mathrm{conv}    \{ (m, \overline{\varphi}(m))| m\in M \}\subset M_{\mathbb{R}}\times \mathbb{R},$$
       and  $\mathcal{P}$ be the rational  polyhedral decomposition of $ M_{\mathbb{R}}$ induced by $\varphi$.  If we define
  $$\Delta_{\varphi}=\{(m,r)\in M_{\mathbb{R}}\times \mathbb{R}| r \geqslant \varphi (m)\}, $$  the standard construction for toric degenerations  gives  a toric  variety $X_{\Delta_{\varphi}}$ with a line bundle $L_{\Delta_{\varphi}}$.

\setlength{\unitlength}{0.8cm}
\begin{center}
\begin{picture}(4.3,3.6)(-2.5,-0.25)
\put(-2,0){\line(1,0){4.4}}
\put(2.45,-.05){$M_{\mathbb{R}}$}
\put(0,0){\vector(0,1){2.2}}
\put(0,2.45){\makebox(0,0){$\mathbb{R}$}}
\qbezier(0.0,0.0)(1.0,0.0)
(2.0,2.0)
\qbezier(0.0,0.0)(-1.0,0.0)
(-2.0,2.0)
\put(0,0){\circle*{0.15}}\put(1,0){\circle*{0.15}}\put(-1,0){\circle*{0.15}}\put(2,0){\circle*{0.15}}\put(-2,0){\circle*{0.15}}
\put(0,0){\line(2,1){1}}\put(1,0.5){\line(2,3){1}}\put(0,0){\line(-2,1){1}}\put(-1,0.5){\line(-2,3){1}}
\put(2,1){$\bar{\varphi}(y)=y^{2}$} \put(1,1){$\varphi$} \put(-1,1.5){$\Delta_{\varphi}$}
\end{picture}
\end{center}

    For any $l\in\mathbb{N}$,
    $H^{0}(X_{\Delta_{\varphi}}, L_{\Delta_{\varphi}}^{l}) $
      is generated by {\em monomial} sections
 \begin{equation}\label{Z}
\{ \mathcal{Z}^{(m,r,l)}\mid \ \ \ \forall (m,r,l)\in C(\Delta_{\varphi}) \cap (M\times\mathbb{Z}\times \{l\})\}
 \end{equation}
where
       $$C(\Delta_{\varphi})=\overline{\{(lm',lr',l)|(m',r')\in \Delta_{\varphi}, l\in \mathbb{R}_{\geq 0}\}}\subset M_{\mathbb{R}}\times\mathbb{R}\times \mathbb{R}.   $$

 Note that we have a {\em canonical} regular function
 $$\bar{\pi}=\mathcal{Z}^{(0,1)}=\mathcal{Z}^{(0,1,0)}: X_{\Delta_{\varphi}}   \rightarrow  \mathbb{C}. $$  The toric boundary is   $\bar{X}_{0}=\{\mathcal{Z}^{(0,1)}=0\}$,  a toric variety with infinite many irreducible components, and $\bar{X}_{t}=\{\mathcal{Z}^{(0,1)}=t\}\cong T_{\mathbb{C}}$,
  for any $t
  \neq 0$.   We have a family of toric varieties $\bar{X}_{t}$ degenerating  to a singular toric varieties $\bar{X}_{0}$.

  The  degeneration of principally polarized   Abelian varieties  $(\pi: \mathcal{X}\rightarrow \Delta, \mathcal{L})$ is constructed as the quotient of an $M_{k}$-action on $(X_{\Delta_{\varphi}}, L_{\Delta_{\varphi}})$ as the following.

  \begin{lemma}\label{le00}   There is an $M$-action on $(X_{\Delta_{\varphi}}, L_{\Delta_{\varphi}})$ such that the projection  $\bar{\pi}$ is $M$-invariant, i.e.  $\bar\pi(m\cdot)=\bar\pi(\cdot)$ for any $m\in M$; the induced $M$-action on monomial rational functions is given by
    $$ \mathcal{Z}^{  (m,r)} \mapsto   \mathcal{Z}^{(m,   r+ Z(\gamma, m))},  \  \  \  \gamma\in M, \ \ \   (m,r)\in M\times\mathbb{Z};$$
    and the induced $M$-action on $H^{0}(X_{\Delta_{\varphi}}, L_{\Delta_{\varphi}})$ is given by  $$ \mathcal{Z}^{  (m,r,1)} \mapsto   \mathcal{Z}^{(m+ \gamma,   r+ \alpha_{\gamma}(m),1)},  \  \  \  \gamma\in M, \ \ \   (m,r)\in \Delta_{\varphi}$$  for monomial sections.
      \end{lemma}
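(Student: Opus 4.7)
The strategy is to define the action at the combinatorial level on the character lattice of the toric variety $X_{\Delta_\varphi}$, and then check that it preserves the cone $C(\Delta_\varphi)$, so that it descends to $(X_{\Delta_\varphi}, L_{\Delta_\varphi})$. Since the monomial sections of $L^l_{\Delta_\varphi}$ are indexed by the lattice points $C(\Delta_\varphi)\cap(M\times\ZZ\times\{l\})$ as in (\ref{Z}), any $M$-action on the ambient lattice $M\times\ZZ\times\ZZ$ that preserves this cone automatically lifts to a compatible $M$-action on $(X_{\Delta_\varphi}, L_{\Delta_\varphi})$, and the induced action on monomial generators is given by applying the lattice action to the exponent.

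For $\gamma\in M$, I would set
\[
\gamma\cdot(m,r,l)=\bigl(m+l\gamma,\;r+Z(\gamma,m)+\tfrac{l}{2}Z(\gamma,\gamma),\;l\bigr).
\]
Specialising to $l=0$ and $l=1$ recovers respectively the formula for $\mathcal{Z}^{(m,r)}$ and for monomial sections $\mathcal{Z}^{(m,r,1)}$ asserted in the lemma, while $\bar\pi = \mathcal{Z}^{(0,1,0)}$ is fixed because $\gamma\cdot(0,1,0)=(0,1,0)$. Verifying the group law $(\gamma+\gamma')\cdot = \gamma\cdot(\gamma'\cdot)$ reduces to
\[
\tfrac{1}{2} Z(\gamma+\gamma',\gamma+\gamma') = \tfrac{1}{2} Z(\gamma,\gamma) + \tfrac{1}{2} Z(\gamma',\gamma') + Z(\gamma,\gamma'),
\]
which is just the symmetry of $Z$.

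The key step is preservation of the cone. A lattice point $(m,r,l)$ with $l>0$ lies in $C(\Delta_\varphi)$ precisely when $r \geq l\varphi(m/l)$. Using the cocycle identity $\varphi(y+\gamma) = \varphi(y) + \alpha_\gamma(y)$, which for the canonical $\varphi$ of (\ref{enew01}) (the lower boundary of the upper convex hull of $\{(m,\bar\varphi(m))\mid m\in M\}$) holds for every $\gamma\in M$, one computes
\[
l\,\varphi\bigl((m+l\gamma)/l\bigr) = l\varphi(m/l) + Z(\gamma,m) + \tfrac{l}{2}Z(\gamma,\gamma).
\]
This is exactly the shift applied to $r$ by $\gamma\cdot$, so the inequality defining the cone is preserved on the stratum $l>0$; the boundary stratum $l=0$ follows by continuity since $C(\Delta_\varphi)$ is closed.

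The only non-routine input is matching the quadratic correction $\tfrac{l}{2} Z(\gamma,\gamma)$: it is forced on us both by the group law and by the shift $\varphi(\cdot+\gamma)-\varphi(\cdot)$, and the fact that the same correction serves both purposes is precisely the cocycle compatibility underlying Mumford's construction. Once this is in place, the $M$-action on $X_{\Delta_\varphi}$ and on $L_{\Delta_\varphi}$ is determined by the formulas in the statement essentially by definition, and $M$-invariance of $\bar\pi$ is immediate.
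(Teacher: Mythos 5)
Your proposal is correct and uses the same underlying lattice action as the paper: your $\gamma\cdot(m,r,l)=(m+l\gamma,\,r+Z(\gamma,m)+\tfrac{l}{2}Z(\gamma,\gamma),\,l)$ is exactly the map $T_{\gamma}$ of (\ref{e1.1}), with $d\alpha_{\gamma}(m)=Z(\gamma,m)$ and $c_{\gamma}=\tfrac{1}{2}Z(\gamma,\gamma)$, and its restrictions to $l=0$ and $l=1$ give the two displayed formulas. The only cosmetic difference is that you verify compatibility by checking directly that the cone $C(\Delta_{\varphi})$ is preserved via the cocycle identity $\varphi(y+\gamma)=\varphi(y)+\alpha_{\gamma}(y)$, whereas the paper checks the dual statement that $\check{T}^{0}_{\gamma}:(\mu,l)\mapsto(\mu-ld\alpha_{\gamma},l)$ preserves the normal fan $\Sigma$ of $\Delta_{\varphi}$; these are equivalent, so your argument is a faithful (and self-contained) variant of the paper's proof.
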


   \begin{proof}  If  $\Sigma \subset N_{\mathbb{R}}\times \mathbb{R}$ denotes  the normal fan of $\Delta_{\varphi}$,  then one-dimensional rays of $\Sigma$ are   one-to-one correspondence  to the maximal dimensional cells in $\mathcal{P}$, and the primitive generator of a ray has the form $(-d\varphi |_{\sigma}, 1)$   for an $n$-dimensional cell $\sigma$ of $\mathcal{P}$.  Then  $M$ acts on $N\times \mathbb{Z} $ by $\check{T}^{0}_{\gamma}: (\mu,l)\mapsto (\mu- ld\alpha_{\gamma}, l) $  for any $\gamma \in M $, which preserves $\Sigma$, and thus  induces an $M $-action  on $X_{\Delta_{\varphi}}$.
      The dual $M $-action  on $M\times\mathbb{Z}  $ is the transpose, i.e. $T^{0}_{\gamma}:  (m,r) \mapsto (m,
  r+ d\alpha_{\gamma}(m))$  for any $\gamma \in M $. Thus the $M $-action  preserves the regular function  $\bar{\pi}=\mathcal{Z}^{(0,1)}$, and the induced $M$-action on monomial rational functions is given by
    $$ \mathcal{Z}^{  (m,r)} \mapsto   \mathcal{Z}^{T^{0}_{\gamma}  (m,r)}= \mathcal{Z}^{(m,   r+ Z(\gamma, m))},$$ for any $  \gamma\in M$ and $  (m,r)\in M\times\mathbb{Z}$.

  For constructing the $M$-action on  $L_{\Delta_{\varphi}}$, we consider
  the  $M $ action on  $C(\Delta_{\varphi}) $ defined  by
    \begin{equation}\label{e1.1} T_{\gamma}:  (m,r,l) \mapsto (m+ l\gamma,   r+ d\alpha_{\gamma}(m)+lc_{\gamma},l)  \  \  \  \  \  \gamma \in M,   \end{equation}  where
     $ d\alpha_{\gamma}(m)= Z(\gamma, m)$ by (\ref{e0.0}) and $c_{\gamma}=
     \frac{1}{2} Z(\gamma, \gamma)$.
         We have $ T_{\gamma}   (m,\varphi(m),1) =  (m+\gamma,   \varphi(m+\gamma),1)$ by (\ref{e0.2}).  This action lifts the $M$-action  on $X_{\Delta_{\varphi}}$ to an $M$-action  on $L_{\Delta_{\varphi}}^{l}$.
     More precisely  the $M$-action  on $L_{\Delta_{\varphi}}^{l}$ is given by  \begin{equation}\label{e1.1+} \mathcal{Z}^{  (m,r,l)} \mapsto \mathcal{Z}^{T_{\gamma}  (m,r,l)}=  \mathcal{Z}^{ (m+ l\gamma,   r+ d\alpha_{\gamma}(m)+lc_{\gamma},l)} \end{equation}  for monomial sections.
    \end{proof}

    For any $k\in \mathbb{N}$, $M_{k}=kM$ is a subgroup, and acts on $(X_{\Delta_{\varphi}}, L_{\Delta_{\varphi}})$ induced by the $M$-action in the above lemma.
Note that the map $\bar \pi$ is  $M_{k}$-invariant, and $M_k$ acts properly and discontinuously   on $\bar{\pi}^{-1}(\Delta)$   for the unit disc $\Delta \subset \mathbb{C}$.
    The quotient is the  degeneration of principally polarized   Abelian varieties $\pi: \mathcal{X}=\bar{\pi}^{-1}(\Delta)/M_{k}\rightarrow \Delta$, and $\pi^{-1}(t)=X_{t}=\bar{X}_{t}/M_{k}$. The central fiber $X_{0}$ of $\mathcal{X}$  is a union of  finite irreducible toric varieties, and the corresponding   intersection complex is $B_{k}=M_{\mathbb{R}}/M_{k} $  with rational  polyhedron decomposition $\tilde{\mathcal{P}}$ induced by $\mathcal{P}$. There is a one to one corresponding between the  $n$-dimensional  cells of $\tilde{\mathcal{P}}$ and the irreducible components of $X_{0}$. More precisely,
    for any $n$-dimensional  cell $\sigma$ of $\tilde{\mathcal{P}}$, we regard it as a rational polytope in $M_{\mathbb{R}} $, and it defines a polarized toric variety $(X_{\sigma}, L_{\sigma})$. The irreducible  component of  $X_{0}$ corresponding to $\sigma$ is isomorphic to  $X_{\sigma}$.
     The  quotient of   $L_{\Delta_{\varphi}}$ by the  $M_{k}$-action   is the relative ample line bundle $\mathcal{L}$ on $\mathcal{X}$.
      The restriction  of $\mathcal{L}$ on any irreducible  component $X_{\sigma}$ of $X_{0}$ is $L_{\sigma}$.
     The $M_{k}$-invariant sections descent  to  sections of  $\mathcal{L}$.

We claim that  and     $\mathcal{X}_{\Delta^{\circ}}=\pi^{-1}(\Delta^{\circ})$          is a base change of $\mathcal{X}_{\eta}$ (c.f. Section \ref{ppav})   via $t \mapsto t^{k}=s$.
       Note that for any $t\neq 0$,   $M_{k}$ acts on $\bar{X}_{t}=T_{\mathbb{C}}$ by
\begin{equation} \label{TZ}
 \mathcal{Z}^{(m,r)} :=\mathcal{Z} ^{m}t^{r}\mapsto \mathcal{Z}^{T^{0}_{\gamma} (m,r)}:=\mathcal{Z}^{(m,r)}t^{ Z(\gamma,m)}.
 \end{equation}
  Since  $t^{ k Z(k^{-1}\gamma,m)}  $,   $ k^{-1}\gamma \in M$,  we obtain that  $\bar{X}_{t}/M_{k} =X_{s}$  by (\ref{e0.100}) where  $ t^{k}=s$.

       For any $m \in B_{k}(\mathbb{Z})= M/M_{k}$, we define the theta function \begin{equation}\label{e1.2} \vartheta_{m}= \sum_{\gamma \in M_{k}} \mathcal{Z}^{T_{\gamma}  (m,\varphi(m),1)}=   \sum_{\gamma \in M_{k}} \mathcal{Z}^{(m+ \gamma, \varphi(m+\gamma),1)}, \end{equation} which is a section of $\mathcal{L}$ (cf.  Example 6.1 of  \cite{GHKS} and Section 2 of  \cite{GS}).   By abusing of notation,   we will use $m$ to denote {\em both} a point in $M$ and its image under
        the quotient map
$$M \longrightarrow B_{k}(\mathbb{Z})=M/kM\subset B_k=M_\RR/kM $$ without any confusing.
            We obtain a basis $\{\vartheta_{m}|m \in B_{k}(\mathbb{Z}) \}$ of $H^{0}(\mathcal{X}, \mathcal{L})$.  For any irreducible component $X_{\sigma}\subset X_{0}$,  $\vartheta_{m}|_{X_{\sigma}}$  is a monomial section of $L_{\sigma}$, and it is not a zero section if and only if $m\in \sigma$.
For any $t\neq0$,
  \begin{equation}\begin{split} \vartheta_{m}(w )& =  \sum_{\gamma \in M_{k}} \mathcal{Z}^{m+ \gamma}t^{ \varphi(m+\gamma)} \\  & =  \sum_{\gamma \in M_{k}} \exp \pi\sqrt{-1}(2\langle w, m+ \gamma\rangle+ \frac{\log t}{2 \pi \sqrt{-1}}Z(m+\gamma, m+\gamma)),   \end{split} \end{equation} by $\varphi(m+\gamma)=\bar{\varphi}(m+\gamma)$,   where $w=w_{1}e^{*}_{1}+\cdots +w_{n}e^{*}_{n}$.  Thus it is the classical theta function

 $$\vartheta_{m}(w)=\vartheta  \begin{bmatrix}
       m            \\[0.2em]
       0
     \end{bmatrix} (w,  \frac{\log t}{2 \pi \sqrt{-1}}Z_{ij} )
$$ on $X_{t}$.
If we regard $\vartheta_{m}|_{X_{t}}$ as on $X_{s}$, $s=t^{k}$,   then
$$ \vartheta_{m}(w) =   \sum_{\gamma' \in M} \exp \pi\sqrt{-1}(2k \langle w, \frac{m}{k}+ \gamma'\rangle+  k\frac{\log s}{2 \pi \sqrt{-1}}Z(\frac{m}{k}+\gamma',\frac{m}{k}+ \gamma')),$$
and a direct calculation shows
$$ \vartheta_{ m}(w+\mu + \frac{\log s}{2 \pi \sqrt{-1}}Z(p,\cdot))= \vartheta_{ m}(w)\exp k\pi \sqrt{-1}(-2\langle w,p\rangle- \frac{\log s}{2 \pi \sqrt{-1}}Z(p,p)),$$
for any $\mu\in N$ and $p\in M$. Thus $\sL|_{X_{t}} \cong \sL_{\eta}^{k}|_{X_{s}}$, i.e.
 $\sL|_{\mathcal{X}_{\Delta^{*}}}$ is the pull-back of  $\sL_{\eta}^{k}$.

\begin{remark}
Notice that, in  particular, the monodromy action $\log t\to \log t+2\pi \sqrt{-1}$ acts trivially on $\vth_m$ via
\begin{eqnarray*}
%\vartheta_{m}\red{(w)}
&&\vartheta  \begin{bmatrix}
       m            \\[0.2em]
       0
     \end{bmatrix} (w,  \left(\frac{\log t}{2 \pi \sqrt{-1}}+1\right )Z_{ij} ) \\
&=&   \sum_{\gamma \in M_{k}} \exp \pi\sqrt{-1}(2 \langle w, m+ \gamma \rangle+  (\frac{\log t}{2 \pi \sqrt{-1}}+1)Z(m+\gamma,m+ \gamma))\\
&=&\vartheta  \begin{bmatrix}
       m            \\[0.2em]
       0
     \end{bmatrix} (w,  \frac{\log t}{2 \pi \sqrt{-1}}Z_{ij} )
\end{eqnarray*}  since $ Z(m+\gamma,m+ \gamma)=2\bar{\varphi}(m+\gamma) \in 2\mathbb{Z} $  for $m\in B_k(\ZZ)$.
\end{remark}

 \begin{example}\label{extend}
  We illustrate the   explicit formula of the theta function $\vartheta_{m}$ in (\ref{e1.2}) in  local coordinates  for a special  1-dimensional family. Let   $M\cong \mathbb{Z}$, $k=3$,  and $\bar{\varphi}(y)=y^2$ on $M_{\mathbb{R}}$.
    Note that $(0, 0)$ is a vertex of $\Delta_{\varphi}$, and the $\mathbb{C}$-algebra $\mathbb{C} [T_{(0,0)}\Delta_{\varphi}\cap M]$  is generated by $z_{1}=\mathcal{Z}^{(1,1)}$, $z_{2}=\mathcal{Z}^{(-1,1)}$ and $t=\mathcal{Z}^{(0,1)}$,   where  $T_{(0,0)}\Delta_{\varphi}$ is  the    tangent cone of $\Delta_{\varphi}$ at $(0, 0)$.  The toric variety $Y_{0}={\rm Spec}(\mathbb{C} [T_{(0,0)}\Delta_{\varphi}\cap M])$ is defined in $\CC^3$  by  equation  $z_{1}z_{2}=t^{2}$, and an open subset of $Y_{0}$ is biholomorphic to a neighborhood $U_{0}$ of the zero strata of $X_{0}$ in $ \mathcal{X}$  corresponding to the vertex  $(0, 0)$.  Let us fix a trivialization of $\sL|_{U_0}\cong \sO_{Y_0}|_{U_0} $ via the indentification  $\mathcal{Z}^{(m,r,1)}\mapsto \mathcal{Z}^{(m,r)}$ for any $(m,r)\in \Delta_{\varphi}\cap M$.  Then  we have
      \begin{eqnarray*} & \vartheta_{0}& = 1+  \sum_{\nu \in  \mathbb{Z}, \nu>0} (z_{1}^{3\nu}+z_{2}^{3\nu})t^{9\nu^{2}-3\nu},\\ &\vartheta_{1}& = z_{1}+  \sum_{\nu \in  \mathbb{Z}, \nu>0}(z_{1}^{1+3\nu}t^{9\nu^{2}+3\nu}+z_{2}^{3\nu-1}t^{9\nu^{2}-9\nu+2}),  \\ & \vartheta_{2}& = z_{2}+  \sum_{\nu \in  \mathbb{Z}, \nu>0}(z_{1}^{3\nu -1}t^{9\nu^{2}-9\nu+2}+z_{2}^{3\nu+1}t^{9\nu^{2}+3\nu}),  \end{eqnarray*}      by (\ref{e1.2}).  In particular, $\vth_i$ extends to the central fiber.
 \end{example}

Notice that the  central extention of the product group $B_k(\ZZ)\times T_k$ with $T_k:=N/N_k$ is precisely the {\em finite Heisenberg group }
$\HH_k=\mu_k\times B_k(\ZZ)\times T_k$  (cf. \cite[Section 3]{Mum-th}) with the multiplication rule:
\begin{equation}
(\mu,a,b)\cdot (\mu',a',b')=(\mu\mu'\displaystyle\exp \frac{2\pi\sqrt{-1}\la b,a\ra}{k}, a+a',b+b')
\end{equation}
for any $(\mu,a,b),(\mu',a',b')\in \mu_k\times B_k(\ZZ)\times T_k$, where $\mu_k$ is the cyclotomic group of order $k$.

\begin{lemma}\label{le01}
The group
$B_k(\ZZ)\times T_k$ acts on $(\cX,\sL)$ and induces a representation  of $\HH_k$ on the $H^{0}(\mathcal{X}, \sL)=\mathrm{Span}_{\mathcal{H}}\{\vth_m\}_{m\in B_k(\ZZ)}$ via
\begin{itemize}
  \item[i)]   For any $a\in B_k(\ZZ)$
  %There is a $B_{k}(\mathbb{Z})$-action on $(\mathcal{X}, \sL)$, which induces a $B_{k}(\mathbb{Z})$-action $ \fT$ on $H^{0}(\mathcal{X}, \sL)$ given by
$$\fT_{a}\vartheta_{m} =  \vartheta_{a+m} $$

  %for any $a,m\in B_{k}(\mathbb{Z})$.
\item[ii)]  For any $b\in T_k$
        %If we denote $T_{k}=N/N_{k}$, where $N_{k}=kN$, then there is a $T_{k}$-action on $(\mathcal{X}, \sL)$ such that the induced  $T_{k}$-action $ \fS$ on $H^{0}(\mathcal{X}, \sL)$ is given  via
$$ \fS_{b}\vartheta_{m}=\vartheta_{m} \exp \frac{2\pi\sqrt{-1}\langle b, m \rangle}{k},$$
%for any $b\in T_{k}$   and $m\in B_k(\ZZ)$.
  \end{itemize}
for all $m\in B_k(\ZZ)$, where $\mathcal{H}$ denotes the ring of holomorphic functions on $\Delta$.  In particular, the representation of $\HH_k$ on $H^0(\cX,\sL)$ is irreducible.
\end{lemma}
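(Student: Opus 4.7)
The plan is to realize $\fT_a$ and $\fS_b$ as descents to $(\cX,\sL)$ of natural operators on $(X_{\Delta_{\varphi}},L_{\Delta_{\varphi}})$, then compute each directly on the defining series (\ref{e1.2}). First I lift $a\in B_k(\ZZ)=M/M_k$ to an element $a\in M$ and take $\fT_a$ to be the $M$-action on $(X_{\Delta_{\varphi}},L_{\Delta_{\varphi}})$ from Lemma \ref{le00}. This descends to a $B_k(\ZZ)$-action on $(\cX,\sL)$ since $M_k\subset M$ is precisely the subgroup being quotiented out. The key observation is that for the canonical $\varphi$ of \cite{AN}, the functional equation $\varphi(y+\gamma)=\varphi(y)+\alpha_{\gamma}(y)$ holds for \emph{all} $\gamma\in M$, not merely $\gamma\in M_k$: the set $\{(m,\overline{\varphi}(m))\}_{m\in M}\subset M_\RR\times\RR$ is invariant under $(y,r)\mapsto(y+a,r+\alpha_a(y))$ by (\ref{e0.1}), hence so is its lower convex hull, which together with (\ref{enew01}) proves the stronger equivariance. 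Applying (\ref{e1.1+}) termwise to (\ref{e1.2}) then gives
\begin{equation*}
\fT_a\vartheta_m=\sum_{\gamma\in M_k}\mathcal{Z}^{(m+a+\gamma,\,\varphi(m+\gamma)+\alpha_a(m+\gamma),\,1)}=\sum_{\gamma\in M_k}\mathcal{Z}^{(m+a+\gamma,\,\varphi(m+a+\gamma),\,1)}=\vartheta_{m+a},
\end{equation*}
and the independence of the lift $a\in M$ is immediate by reindexing the sum with $M_k$.

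For part ii), given $b\in T_k=N/N_k$ with $N_k:=kN$, I lift to $b\in N$ and consider the element $\exp(2\pi\sqrt{-1}\,b/k)\in T_\CC=N\otimes\CC^*$ acting via the standard torus action on the toric variety $X_{\Delta_{\varphi}}$; it sends $\mathcal{Z}^{(m,r,1)}\mapsto \exp(2\pi\sqrt{-1}\langle b,m\rangle/k)\,\mathcal{Z}^{(m,r,1)}$. Because $\langle b,\gamma\rangle\in k\ZZ$ for every $\gamma\in M_k$, this torus action commutes with the $M_k$-action of Lemma \ref{le00} and descends to $(\cX,\sL)$; changing the lift by $k\nu$ with $\nu\in N$ multiplies by $\exp(2\pi\sqrt{-1}\langle \nu,m\rangle)=1$, so the operator depends only on the class in $T_k$. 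Applied termwise to (\ref{e1.2}), the integrality $\langle b,\gamma\rangle/k\in\ZZ$ for $\gamma\in M_k$ lets me pull the constant out of the sum to obtain the asserted formula $\fS_b\vartheta_m=\exp(2\pi\sqrt{-1}\langle b,m\rangle/k)\,\vartheta_m$.

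With i) and ii) in hand a direct calculation on the basis yields $\fS_b\fT_a=\exp(2\pi\sqrt{-1}\langle b,a\rangle/k)\,\fT_a\fS_b$, which is exactly the commutation relation defining $\HH_k$, so $(\mu,a,b)\mapsto \mu\,\fT_a\fS_b$ defines a representation of $\HH_k$ on $H^{0}(\cX,\sL)=\mathrm{Span}_{\mathcal{H}}\{\vartheta_m\}$. For irreducibility, note that the induced pairing $T_k\times B_k(\ZZ)\to\ZZ/k\ZZ$ is perfect (it is the reduction mod $k$ of the perfect pairing $N\times M\to\ZZ$), so the characters $b\mapsto\exp(2\pi\sqrt{-1}\langle b,m\rangle/k)$ of $T_k$ are pairwise distinct as $m$ ranges over $B_k(\ZZ)$; consequently each $\mathcal{H}\vartheta_m$ is its own $T_k$-isotypic component, any $\HH_k$-invariant submodule $W$ is a sum of such lines, and the transitive permutation action of $\fT_a$ forces $W=0$ or $W=H^{0}(\cX,\sL)$. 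The main point requiring care throughout is the well-definedness of both actions on the quotient $\cX$, i.e.\ independence of the lifts $a\in M$ and $b\in N$; this hinges on the stronger $M$-equivariance of the canonical $\varphi$ and on the integrality of the pairings on $M_k\times N$ and $M\times N_k$, after which the verifications are routine.
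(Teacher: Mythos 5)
Your proof is correct and follows essentially the same route as the paper: both realize $\fT_a$ as the descent of the $M$-action of Lemma \ref{le00} and $\fS_b$ as the descent of the restriction of the standard $T_{\CC}$-action to the image of $T_k$ in $T_{\CC}$, then compute termwise on the defining series (\ref{e1.2}). You additionally spell out the full $M$-equivariance of the canonical $\varphi$, the commutation relation $\fS_b\fT_a=\exp(2\pi\sqrt{-1}\langle b,a\rangle/k)\,\fT_a\fS_b$, and the irreducibility via the perfect pairing $T_k\times B_k(\ZZ)\to\ZZ/k\ZZ$, all of which the paper's proof leaves implicit.
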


 \begin{proof}  The $M$-action on  $(X_{\Delta_{\varphi}}, L_{\Delta_{\varphi}})$ in Lemma \ref{le00} induces the $B_{k}(\mathbb{Z})$-action on $(\mathcal{X}, \mathcal{L})$, which acts on $H^{0}(\mathcal{X}, \mathcal{L})$ given by   \begin{eqnarray*}\fT_{a}\vartheta_{m}& = & \sum_{\gamma \in M_{k}} \mathcal{Z}^{T_{a}(m+ \gamma, \varphi(m+\gamma),1)} \\ & = & \sum_{\gamma \in M_{k}} \mathcal{Z}^{(m+ \gamma+a, \varphi(m+\gamma)+\alpha_{a}(m+\gamma) ,1)} \\ & = & \sum_{\gamma \in M_{k}} \mathcal{Z}^{(m+ \gamma+a, \varphi(m+\gamma+a),1)}\\ & = &\vartheta_{a+m}\end{eqnarray*} for any $m\in B_{k}(\mathbb{Z})$ and $a\in B_{k}(\mathbb{Z})$ by (\ref{e1.2}). We obtain i), and next we prove ii).

  Note that there is a natural injective     homeomorphism $\iota_{k}:T_{k} \hookrightarrow T_{\mathbb{C}} $ such that $$\mathcal{Z}^{m}(\iota_{k}(b))=\exp \frac{2\pi\sqrt{-1}\langle b, m\rangle}{k} $$ for any $b\in T_{k}$ and $m\in M$. The standard $T_{\mathbb{C}} $-action on $(X_{\Delta_{\varphi}}, L_{\Delta_{\varphi}}^{l})$ induces a $T_{k}$-action, which preserves the regular function $\bar{\pi}=\mathcal{Z}^{(0,1)} $, acts  on monomial  rational functions  by $$\mathcal{Z}^{(m, r)}\mapsto  \mathcal{Z}^{(m, r)}  \exp \frac{2\pi\sqrt{-1}\langle b, m\rangle}{k} $$ for any $(m,r)\in M\times\mathbb{Z}$,  and  acts on monomial sections  of $L_{\Delta_{\varphi}}^{l}$ via $$\mathcal{Z}^{(m, r,l)}\mapsto  \mathcal{Z}^{(m, r,l)}  \exp \frac{2\pi\sqrt{-1}\langle b, m\rangle}{k} $$ for any $(m,r,l)\in C( \Delta_{\varphi})$.  The induced $T_{k}$-action on $\bigoplus_{l=0}^{\infty}H^{0}(X_{\Delta_{\varphi}}, L_{\Delta_{\varphi}}^{l})$ commutes with the $M_{k}$-action by  $$ \mathcal{Z}^{T_{\gamma}(m, r,l)} \exp \frac{2\pi\sqrt{-1}\langle b, m\rangle}{k}= \mathcal{Z}^{(m+ l\gamma,   r+ d\alpha_{\gamma}(m)+lc_{\gamma},l)} \exp \frac{2\pi\sqrt{-1}\langle b, m+ l\gamma\rangle}{k}, $$ for any $\gamma\in M_{k}$ by (\ref{e1.1+}).   Thus
  the $T_{k}$-action commutes with the $M_{k}$-action on $(X_{\Delta_{\varphi}}, L_{\Delta_{\varphi}}^{l})$,  which induces a $T_{k}$-action on $(\pi:\mathcal{X}\rightarrow \Delta, \mathcal{L})$.    We denote $ \fS$  the induced $T_{k}$-action on  $H^{0}(\mathcal{X}, \mathcal{L})$, which satisfies $$
 \fS_{b}\vartheta_{m}= \sum_{\gamma \in M_{k}} \mathcal{Z}^{(m+ \gamma, \varphi(m+\gamma),1)}\exp \frac{2\pi\sqrt{-1}\langle b, m +\gamma \rangle}{k}=\vartheta_{m}\exp \frac{2\pi\sqrt{-1}\langle b, m \rangle}{k}, $$
for any $b\in T_{k}$ and any   $m\in B_{k}(\mathbb{Z})$,
by (\ref{e1.2}).
\end{proof}

\begin{remark}  The direct calculations show that on any $X_{t}$, $t\neq 0$,   the action of $B_k(\ZZ)\times T_k$ on $(X_t,\sL|_{X_t})$ is  given by $ w\mapsto w+\frac{b}{k}+ \frac{\log t}{2\pi\sqrt{-1}}Z(\cdot,a)$,
\begin{eqnarray*}(\fT_{a}\vartheta_{m})(w)& = &\exp\pi\sqrt{-1}(2\la w, a\ra+\frac{\log t}{2\pi\sqrt{-1}}Z(a,a))\vartheta_{m}(w+ \frac{\log t}{2\pi\sqrt{-1}}Z(\cdot,a))\\ & = & \vartheta_{a+m} (w),\\
( \fS_{b}\vartheta_{m})(w)&= & \vth_m(w+\frac{b}{k})=\exp \frac{2\pi\sqrt{-1}\langle b, m \rangle}{k}\vartheta_{m}(w),\end{eqnarray*} for any $m\in B_k(\ZZ)$ and $(a,b)\in B_k(\ZZ)\times T_{k}$.  In particular, there the finite torus $B_k(\ZZ)\times T_{k}$ acts on the image of the projective embedding $\Phi_k(X_t)$.
\end{remark}

\section{Proofs of Main Theorems}

Before we start the proof, let us recall the Hermitian metric on $ \sL|_{X_t}\to X_t$   with $s=t^k$  is given by
\begin{equation} h(w):=\exp\left ( \frac{2\pi}{\log|t|}Z^{ij}y_iy_j\right )= \exp k\left ( \frac{2\pi}{\log|s|}Z^{ij}y_iy_j\right )
\end{equation}
with
$$w_{i}=x_{i}+(\arg (s)-\sqrt{-1}\frac{\log |s|}{2\pi }  )\sum_{j=1}^nZ_{ij}y_{j},\  i=1, \cdots, n\ . $$  On $X_{t}\cong X_{s}$, we have $$\omega_{t}=-\sqrt{-1}\partial\overline{\partial}\log h=k \omega_{s},  $$ by (\ref{e0.10}).
\begin{lemma} $\forall a\in M, b\in N$ and function $f: N_{\mathbb{C}} \rightarrow \mathbb{C}$   we have
\begin{equation}
h(w)|\fS_b f|^2 =(h|f|^2)(w+\frac{b}{ k});
\text{  } h(w)|\fT_a f|^2 =(h|f|^2)\left (w+\frac{\log  s}{2\pi\sqrt{-1}}Z(\cdot, \frac{a}{ k})\right)\ .
%\left|f(w+\frac{\log \red s}{2\pi\sqrt{-1}}Z(\cdot,\frac{a}{\red k}))\right|^2.
\end{equation}
\end{lemma}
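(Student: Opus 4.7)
The plan is to verify both identities by direct computation from the explicit formulas for the translations and multiplicative factors of $\fS_b$ and $\fT_a$ supplied by the Remark immediately preceding the lemma. Conceptually, both identities express the compatibility between the Hermitian metric $h$ on $\sL|_{X_t}$ and the Heisenberg action on sections: the automorphy factors appearing in $\fS_b$ and $\fT_a$ are, by construction, precisely what is needed to cancel the failure of $h$ to be translation-invariant on $N_{\mathbb{C}}$.

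First I would dispatch the $\fS_b$ identity. By the Remark, $\fS_b$ shifts $w \mapsto w + b/k$ with no additional multiplicative factor (since the phase $\exp(2\pi\sqrt{-1}\la b,m\ra/k)$ has modulus one). Writing $b = \sum_i b^i e_i^* \in N$, the shift $w_i \mapsto w_i + b^i/k$ is real. In the decomposition $w_{i}=x_{i}+(\arg (s)-\sqrt{-1}\frac{\log |s|}{2\pi })\sum_{j}Z_{ij}y_{j}$, only the $x_i$-coordinates are affected while the $y_j$-coordinates are preserved. Since $h$ depends on $y$ alone, $h(w+b/k)=h(w)$, and the first identity follows immediately.

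The substantive case is $\fT_a$. The Remark writes $(\fT_a f)(w)=\epsilon_a(w)\,f(w+\tau_a)$ with
$$\tau_a=\frac{\log s}{2\pi\sqrt{-1}}Z\bigl(\cdot,\tfrac{a}{k}\bigr)=\frac{\log t}{2\pi\sqrt{-1}}Z(\cdot,a),\qquad \epsilon_a(w)=\exp\pi\sqrt{-1}\Bigl(2\la w,a\ra+\tfrac{\log t}{2\pi\sqrt{-1}}Z(a,a)\Bigr).$$
Taking imaginary parts of the translation $w_i\mapsto w_i+\frac{\log t}{2\pi\sqrt{-1}}\sum_j Z_{ij}a^j$ and using $\mathrm{Im}(w_i)=-\frac{\log|s|}{2\pi}\sum_jZ_{ij}y_j$ together with $\log|s|=k\log|t|$ yields $y_j\mapsto y_j+a^j/k$. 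Thus $h(w+\tau_a)/h(w)$ is the exponential of a quadratic in $y$ and $a$ that depends only on how $h$ varies in the $y$-direction. On the other hand,
$$|\epsilon_a(w)|^{2}=\exp\Bigl(-2\pi\,\mathrm{Im}\bigl[2\la w,a\ra+\tfrac{\log t}{2\pi\sqrt{-1}}Z(a,a)\bigr]\Bigr),$$
and computing the imaginary parts using the same formula for $\mathrm{Im}(w_i)$ gives a second exponential of a quadratic in $y$ and $a$. I would then verify, as a polynomial identity after taking logarithms, that these two quadratic expressions coincide, i.e.\ $h(w)\,|\epsilon_a(w)|^{2}=h(w+\tau_a)$. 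This last equality is the precise statement that $h$ is invariant under the lattice action descending from $\fT_a$ on $\sL|_{X_t}$, and once it is established the second identity follows by multiplying through by $|f(w+\tau_a)|^2$.

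The main obstacle is purely bookkeeping: keeping track of the various factors of $\log|s|$, $\log|t|$, $2\pi$, $\sqrt{-1}$, and $k$, and making consistent use of $\log|s|=k\log|t|$ together with the relation $\mathrm{Im}(w_i)=-\frac{\log|s|}{2\pi}\sum_j Z_{ij}y_j$. No new geometric input is required beyond this, since the compatibility $h(w)\,|\epsilon_a(w)|^{2}=h(w+\tau_a)$ is built into the definition of $h$ as the Hermitian metric on $\sL|_{X_t}$ whose associated automorphy factor is $\epsilon_a$.
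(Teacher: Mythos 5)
Your argument is correct and is in substance the proof the paper intends: the paper gives no computation here at all, deferring entirely to \cite[Proposition 3]{WY}, and that reference establishes exactly these two identities by the direct verification you outline. Both of your reductions are right --- the $\fS_b$ case is immediate because $b/k$ is a real translation and $h$ depends only on $\mathrm{Im}(w)$, and the $\fT_a$ case reduces to the quasi-periodicity $h(w)\,|\epsilon_a(w)|^{2}=h(w+\tau_a)$, which does hold as a polynomial identity in the exponents for every real $a$ (not only for lattice vectors), so the bookkeeping you defer does close up. One practical caution for when you carry out that last check: the formula for $h$ as printed in the paper, with $Z^{ij}y_iy_j$ and coefficient $2\pi/\log|t|$, is not normalized consistently with the automorphy factor of $\vartheta_m$ as written; the identity becomes exact if you take $h=\exp\bigl(-2\pi\,\mathrm{Im}(w)^{T}(\mathrm{Im}\Omega)^{-1}\mathrm{Im}(w)\bigr)$ for the period matrix $\Omega=\frac{\log t}{2\pi\sqrt{-1}}Z_{ij}$ of $X_t$, so you should fix the normalization of $h$ from the quasi-periodicity itself rather than from the displayed formula.
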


\begin{proof}   See   \cite[Proposition 3]{WY}.
\end{proof}
As a consequence, for  $a\in B_k(\ZZ),\ b\in T_k$ and $ f\in H^0(X_t,\sL|_{X_t})$   we have
\begin{eqnarray*}
\|(\fS_b f)(w)\|^2_\FS
&=&\int\frac{|(\fS_bf)(w)|^2}{\sum_{m}|\vth_m(w)|^2}\Phi_k^\ast\omega^n_\FS\\
&=&\int\frac{h|(\fS_bf)(w)|^2}{\sum_{m}h|\fS_b\vth_m(w)|^2}\Phi_k^\ast\omega^n_\FS\\
&=&\int\frac{(h|f|^2)\left(w+\frac{b}{k}\right)}{(\sum_{m}h|\vth_m|^2)\left(w+\frac{b}{k}\right)}(\fS_b\circ\Phi_k)^\ast\omega^n_\FS\\
&=&\int\left(\frac{|f|^2}{\sum_{m}|\vth_m|^2}\Phi_k^\ast\omega^n_\FS\right)\left(w+\frac{b}{k}\right)\\
&=&\|f(w)\|^2_\FS
\end{eqnarray*}
and
\begin{eqnarray*}
\|(\fT_a f)(w)\|^2_\FS
&=&\int\frac{|(\fT_af)(w)|^2}{\sum_{m}|\vth_m(w)|^2}\Phi_k^\ast\omega^n_\FS\\
&=&\int\frac{h|(\fT_af)(w+ \frac{\log  s}{2\pi\sqrt{-1}}Z(\cdot ,\frac{a}{ k}))|^2}{\sum_{m}h|\fT_a\vth_m(w+ \frac{\log  s}{2\pi\sqrt{-1}}Z(\cdot ,\frac{a}{ k}))|^2}(\fT_a\circ\Phi_k)^\ast\omega^n_\FS\\
%&=&\int\frac{(h|f|^2)(kw+b)}{(\sum_{m}h|\vth_m|^2)(lw+Za)}(\fS\circ\iota)^\ast\omega^n_\FS\\
&=&\int\left(\frac{|f|^2}{\sum_{m}|\vth_m|^2}\Phi_k^\ast\omega^n_\FS\right)(w+ \frac{\log  s}{2\pi\sqrt{-1}}Z(\cdot ,\frac{a}{ k}))\\
&=&\|f(w)\|^2_\FS\ .
\end{eqnarray*}
Hence the  finite group generated by image of
$$ \{\fT_a,\fS_b \mid (a,b)\in B_k(\ZZ)\times T_k\} \subset \mathrm{GL}(H^0(X_{ s},\sL|_{X_{ s}}))$$
actually lies in $\mathrm{U}(k^n)$ with respect to  the Fubini-Study metric induced via the embedding
 $$
 \Phi_k=[\vth_{m_1},\cdots,\vth_{m_{k^n}}]:\cX\longrightarrow \CC\PP^{k^n-1}\ .
 $$
 \begin{proof}[Proof of Theorem \ref{thm01}]
 It follows from above that the action generated by $B_k(\ZZ)\times T_k$ via   $\fT$ and $\fS$ lies in $\mathrm{U}(k^n)$. And   Lemma \ref{le01}  implies that sections $\{\vth_m\}_{m\in B_k(\ZZ)}$ forms an orthonormal basis  with respect to the pull back of Fubini-Study metric via the map $\Phi_k$, that is, the embedding $\Phi_k$ is balanced for each $t\in \Delta^\circ$.

 On the other hand, $\Phi_k(X_t)$  being balanced for each $t\ne 0$ implies that the  Chow point for $\Phi_k(X_t)$ lies on the $0$-level set the moment map

 \begin{equation*}
\begin{array}{cccc}
  \mu_{{\rm SU}}:{\rm Chow}_{\mathbb{CP}^{k^n-1}}(d,n) & \overset{}{ \xrightarrow{\hspace*{1.7cm}}} & \mathfrak{su}(k^n) \\
X_t & \longmapsto & \sqrt{-1}\cdot \displaystyle \int_{X_t}\left(\frac{\vth_m \bar\vth_{m'}}{\sum_m|\vth_m|^2}-\frac{\delta_{mm'}}{k^n}\right )\frac{ \Phi_k^\ast\omega^n_\FS}{n!} &
\end{array}%
\end{equation*}%
  of the $\mathrm{SU}(k^n)$-action on the Chow variety of $n$-dimensional degree $d$ cycles in $\CC\PP^{k^n-1}$ (c.f. \cite[Proposition 17]{W} and \cite[Theorem 1.4]{ZS}),
  which is proper via standard Kirwan-Kempf-Ness theory  in \cite{Kir} (c.f. also in \cite{Th}).  Notice that $\{\vth_m(\cdot,   \frac{\log t}{2\pi\sqrt{-1}}Z)\}_m$  vary  holomorphically with respect to $t$ and can be extended to $X_0$  by the construction of theta functions in Section 3, these imply that $\Phi_k$ has bounded image in $\CC\PP^{k^n-1}$, by Riemann mapping Theorem, the {\em unique} continuous extension $\Phi_k(X_0)$ must lies in $ \mu_{\rm SU}^{-1}(0)$, that is, the embedding $\Phi_k(X_0)$ is balanced as well.

  \end{proof}
   \begin{proof}[Proof of Theorem \ref{thm02}] For any $m\in M$, we denote $\check{m}\subset N_{\mathbb{R}}$ the dual polytope of $m$ with  respect   to $\varphi$. More precisely, if $ \check{m}^{o}\subset N_{\mathbb{R}}\times \mathbb{R}$ denotes the dual cone of the tangent cone  $T_{(m,\varphi(m))}\Delta_{\varphi} \subset M_{\mathbb{R}}\times \mathbb{R}$ of $\Delta_{\varphi}$ at the vertex $(m, \varphi(m))$, then $\check{m}= \check{m}^{o} \cap (N_{\mathbb{R}}\times \{1\})$.   The Monge-Amp\`{e}re measure of $\varphi$ is  $${\rm MA}(\varphi)=\sum_{m\in B_{k}(\mathbb{Z})}{\rm Vol}(\check{m})\delta_{m},  $$ where ${\rm Vol}(\check{m})$ is the Euclidean volume of $\check{m}$ (cf. Proposition 2.7.4 in  \cite{BPS}).
     Since the   $M$-action $\check{T}^{0}$ on $N_{\mathbb{R}}\times \mathbb{R} $  preserves the fan  $\Sigma$ of  $\Delta_{\varphi}$, we obtain $\check{T}^{0}_{m-m'}(\check{m}')=\check{m}$ for any two $m$ and $m'\in M$, and thus ${\rm Vol}(\check{m})=V$ is independent of $m$.

     Let $D$ be the fundamental domain of the    $M_{k}$-action $\check{T}^{0}$ on $N_{\mathbb{R}}\times \{1\} $. Since $\check{T}^{0}_{\gamma} (0,l)= (- d\alpha_{\gamma}, l) =(-Z(\gamma,\cdot),1)$  for any $\gamma \in M $, we let  $D$ be the convex hull of $(0,1), (-Z(k e_{1},\cdot),1), \cdots, (-Z(k e_{n},\cdot),1)$  in $N_{\mathbb{R}}\times \{1\} $, where $e_{1}, \cdots, e_{n}$ is a basis of $M$. The Euclidean volume of $D$ is ${\rm Vol}(D)=k^{n}\det (Z_{ij})$, where $Z_{ij}=Z(e_{i},e_{j}) $, and we have $$  {\rm Vol}(D)=\sum_{m\in B_{k}(\mathbb{Z})}{\rm Vol}(\check{m})=k^{n} V.$$ We obtain $V=\det (Z_{ij})$, and the conclusion $${\rm MA}(\varphi)=\det (Z_{ij}) \sum_{m\in B_{k}(\mathbb{Z})}\delta_{m}. $$

      Let $\chi_{k}: B\rightarrow B_{k}$ be the  diffeomorphism  induced by the dilation $y_{i} \mapsto k y_{i}$, $i=1, \cdots, n$, of $M_{\mathbb{R}}$. For any smooth function $f$ on $B$,  $$  \frac{1}{k^{n}}\int_{B} f \chi_{k}^{*}(  \sum_{m\in B_{k}(\mathbb{Z})}\delta_{m})=\frac{1}{k^{n}}   \sum_{m'\in (\frac{1}{k}M)/M}f(m')  \rightarrow \int_{B}f dy_{1}\wedge \cdots \wedge dy_{n}$$ when $k\rightarrow \infty$, which implies that $$  \frac{1}{k^{n}} \chi_{k}^{*}{\rm MA}(\varphi) \rightharpoonup \det(Z_{ij})dy_{1}\wedge \cdots \wedge dy_{n}={\rm MA}(\bar{\varphi})$$ in the weak sense.

      By (\ref{e0.1}) and (\ref{e0.2}),   $(\varphi-\bar{\varphi})(y+\gamma)=(\varphi-\bar{\varphi})(y)$ for any $y\in M_{\mathbb{R}}$ and any $\gamma \in M$, and  $\varphi-\bar{\varphi}$ is a  well-defined function on $B_{k}$.   Since $$ \sup_{ M_{\mathbb{R}}}|\varphi-\bar{\varphi}|=\sup_{y \in  M_{\mathbb{R}}/ M}|\varphi-\bar{\varphi}|(y),$$ we obtain
$$
\sup_{ B}\left | \frac{1}{k^{2}} \chi_{k}^{*} \varphi-  \frac{1}{k^{2}} \chi_{k}^{*} \bar{\varphi}\right| =\frac{1}{k^{2}}\sup_{ M_{\mathbb{R}}}|\varphi-\bar{\varphi}|   \rightarrow 0,
$$
when $k\rightarrow\infty$, and the conclusion by $\frac{1}{k^{2}}\bar{\varphi}(ky)=\bar{\varphi}(y)$.
  \end{proof}
\section{Appendix}
In this section, we state the following Theorem which unified the proof of balanced embedding for projective space and princpally polarized Abelian varieties.

\newcommand{\SU}{\mathrm{SU}}
\begin{theorem}Let $X\subset \PP^N$ be a subvariety and $G<\SU(N+1)$ be a compact subgroup which leaves the embedding $X\hookrightarrow\PP^N$ invariant.  Suppose the centralizer of $c_G< \SU(N+1)$ of $G$ inside $\SU(N+1)$ is trivial. Then the embedding $X\subset \PP^N$ is balanced, i.e.
\begin{equation}\label{mu}
\int_X\mu_{\PP^N}\frac{\omega_\FS^n}{n!}=0.
\end{equation}
\end{theorem}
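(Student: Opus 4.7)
The plan is to recognize that the balanced condition \eqref{mu} is precisely the vanishing of a certain element $A \in \mathfrak{su}(N+1)$ (identified with its dual via the Killing form), namely
\[
A := \int_X \mu_{\PP^N}\,\frac{\omega_\FS^n}{n!},
\]
where $\mu_{\PP^N}:\PP^N\to\mathfrak{su}(N+1)$ is the moment map for the standard $\SU(N+1)$-action on $(\PP^N,\omega_\FS)$. Thus proving the theorem amounts to showing $A=0$.

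The key input will be the $\SU(N+1)$-equivariance of the moment map together with the fact that $\omega_\FS$ is $\SU(N+1)$-invariant. Concretely, for any $g\in \SU(N+1)$ one has $\mu_{\PP^N}(g\cdot x)=\mathrm{Ad}(g)\,\mu_{\PP^N}(x)$ and $g^\ast\omega_\FS=\omega_\FS$. Restricting to $g\in G$, and using the hypothesis $g\cdot X = X$ (setwise), I would substitute $x\mapsto g\cdot x$ into the defining integral and obtain
\[
A = \int_X \mu_{\PP^N}(g\cdot x)\,\frac{\omega_\FS^n}{n!} = \mathrm{Ad}(g)\int_X \mu_{\PP^N}\,\frac{\omega_\FS^n}{n!} = \mathrm{Ad}(g)\,A,
\]
so that $A$ lies in the $\mathrm{Ad}(G)$-fixed subspace of $\mathfrak{su}(N+1)$.

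Finally I would identify this fixed subspace. Standard Lie theory says that the $\mathrm{Ad}(G)$-invariants in $\mathfrak{su}(N+1)$ coincide with $\mathrm{Lie}(c_G)$, the Lie algebra of the centralizer of $G$ in $\SU(N+1)$. Since $c_G$ is assumed trivial, its Lie algebra is zero, so $A=0$, which is exactly \eqref{mu}.

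The only subtle point — and the main obstacle to state cleanly rather than prove — is the identification of the $G$-fixed vectors in $\mathfrak{su}(N+1)$ with $\mathrm{Lie}(c_G)$; this uses that $G$ is compact (so one can average), so one-parameter subgroups through $G$-fixed vectors commute with $G$. Once this is set up, the three-line computation above delivers the balancedness immediately, and both previously-treated cases (the trivial embedding $\PP^n\hookrightarrow\PP^n$ under $G=\SU(n+1)$, and principally polarized abelian varieties under $G=\HH_k$ as in Lemma~\ref{le01}) fall out uniformly by verifying that the relevant centralizer is trivial.
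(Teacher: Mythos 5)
Your proposal is correct and follows essentially the same route as the paper: both use $G$-invariance of $X$ and $\mathrm{Ad}$-equivariance of the moment map to show $\int_X\mu_{\PP^N}\frac{\omega_\FS^n}{n!}$ is $\mathrm{Ad}(G)$-fixed, then identify the fixed subspace with $\mathrm{Lie}(c_G)=0$. The only difference is that you spell out the (standard, compactness-based) identification of the $\mathrm{Ad}(G)$-invariants with the Lie algebra of the centralizer, which the paper takes for granted.
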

\begin{proof}
Notice that  the moment map of $\SU(N+1)$-action on the Chow variety $\mathrm{Chow}_{\PP^N}(n,d)$ of dimension $n$ and degree $d$ cycle in $\PP^N$ is precisely given by \eqref{mu}.  In particular, it is $\SU(N+1)$, and hence $G$-equivariant.  This implies that for any $g\in G$
\begin{eqnarray*}\label{mu}
&&\int_{ X}\mu_{\PP^N}\frac{\omega_\FS^n}{n!}
=\int_{g\cdot X}\mu_{\PP^N}\frac{\omega_\FS^n}{n!}=\int_{X}\mu_{\PP^N}\circ g\frac{\omega_\FS^n}{n!}\\
&=&\mathrm{Ad}_g\left(\int_{X}\mu_{\PP^N}\frac{\omega_\FS^n}{n!}\right).
\end{eqnarray*}
By our assumption, we have $\displaystyle \int_{ X}\mu_{\PP^N}\frac{\omega_\FS^n}{n!}\in \mathfrak{c}_G=0$, where $\mathfrak{c}_G=\mathrm{Lie}(c_G)$ is the Lie algebra. And our proof is thus completed.
\end{proof}


\begin{thebibliography}{99}
\bibitem{ABC} P. Aspinwall,  T.  Bridgeland, A.  Craw, M.  Douglas, M.  Gross, A. Kapustin, G.  Moore, G. Segal, B. Szendr\H{o}i, P. Wilson, {\em
Dirichlet branes and mirror symmetry.}
Clay Mathematics Monographs, 4. American Mathematical Society, (2009).
 \bibitem{AN}  V. Alexeev, I. Nakamura,  {\em On Mumford's construction of degenerating Abelian varieties},  \rm Tohoku Math. J. 51 (1999), 399--420.
  \bibitem{BL}   C.  Birkenhake, H. Lange, {\em
Complex abelian varieties,}   Grundlehren der Mathematischen Wissenschaften, 302. Springer-Verlag, Berlin, 2004.
  \bibitem{BFJ} S. Boucksom, C.  Favre, M.  Jonsson, {\em Solution to a non-Archimedean Monge-Amp\`{e}re equation,}  J. Amer. Math. Soc. 28  no. 3, (2015),  617--667.
  \bibitem{BPS}  J.  Burgos Gil, P.  Philippon, M.  Sombra, {\em  Arithmetic geometry of toric varieties. Metrics, measures and heights.}  Ast\'{e}risque No. 360 (2014).
 \bibitem{Dos} S.   Donaldson, {\em  Scalar curvature and projective embeddings I,}  J. Differential Geom. 59 no. 3, (2001),  479--522.
   \bibitem{DKLR}  M.  Douglas, R.  Karp, S.  Lukic, R.  Reinbacher,  {\em  Numerical Calabi-Yau metrics},  J. Math. Phys. 49 no. 3, (2008),  032302, 19 pp.
  \bibitem{Gro}  M.  Gross, {\em  Mirror symmetry and the Strominger-Yau-Zaslow conjecture.}   Current developments in mathematics 2012,  Int. Press, Somerville, MA, (2013),  133--191.
 \bibitem{GHKS}  M. Gross,  P.  Hacking, S.  Keel, B.  Siebert,  {\em   Theta functions on varieties with effective anti-canonical class}, \rm  arXiv:1601.07081.
  \bibitem{GHKS2}  M. Gross,  P.  Hacking, S.  Keel, B.  Siebert,  {\em   Theta functions for K3 surface}, preprint.
  \bibitem{GS1}  M. Gross, B.  Siebert, {\em  From real affine geometry to complex geometry.}  Ann. of Math. (2) 174 no.3, (2011),  1301--1428.
  \bibitem{GS}   M.  Gross, B.  Siebert,  {\em Theta functions and mirror symmetry},   arXiv:1204.1991.
   \bibitem{GW}  M. Gross,  P.M.H. Wilson,  {\em    Large complex structure limits of
K3 surfaces}, \rm J. Diff. Geom.  55 (2000),  475--546.
 \bibitem{GTZ} M. Gross, V. Tosatti, Y. Zhang, {\em  Collapsing of Abelian Fibred Calabi-Yau Manifolds},  Duke Math. J.  162 (2013),  517--551.
 \bibitem{GTZ2} M. Gross, V. Tosatti, Y. Zhang,  {\em Gromov-Hausdorff collapsing of Calabi-Yau manifolds},  arXiv:1304.1820,  to appear in  Comm. Anal. Geom.
\bibitem{Kir}F. Kirwan {\em Cohomology of Quotients in Symplectic and Algebraic Geometry } Mathematical Notes, 31. Princeton University Press, Princeton, NJ, 1984.
\bibitem{KS} M. Kontsevich, Y. Soibelman, {\em  Homological mirror symmetry and torus fibrations
 }, in \it Symplectic geometry and mirror symmetry, \rm  World Sci.
 Publishing, (2001), 203--263.
  \bibitem{KS2} M. Kontsevich, Y. Soibelman, {\em  Affine Structures and Non-Archimedean Analytic Spaces}, in \it The Unity of Mathematics, \rm  Progress in Mathematics Volume 244, Springer,  (2006),  321--385.
   \bibitem{Liu}    Y.  Liu, {\em A non-Archimedean analogue of the Calabi-Yau theorem for totally degenerate abelian varieties.}  J. Differential Geom. 89 no.1,  (2011),  87--110.
 \bibitem{Mum}  D. Mumford,  {\em An analytic construction of degenerating Abelian varieties over complete rings,} Compositio Math. 24 No.3, (1972), 239--272.
  \bibitem{Mum-th}  D. Mumford,  {\em Tata lectures on Theta. I. With the assistance of C. Musili, M. Nori, E. Previato and M. Stillman.} Progress in Mathematics, {\bf 28}. Birkh\"auser Boston, Inc., Boston, MA, 1983. xiii+235 pp.
 \bibitem{No}  Y.   Nohara, {\em  Projective embeddings and Lagrangian fibrations of abelian varieties,}  Math. Ann. 333 no. 4, (2005),  741--757.
  \bibitem{TW}  N.  Trudinger, X.  Wang, {\em  The Monge-Amp\`{e}re equation and its geometric applications,}  Handbook of geometric analysis.  Adv. Lect. Math. (ALM), 7,  No. 1, (2008),  467--524.
       \bibitem{RS} H. Ruddat, B. Siebert, {\rm Canonical Coordinates in Toric Degenerations},  arXiv:1409.4750.
       \bibitem{Th}  R. P.  Thomas, {\em Notes on GIT and symplectic reduction for bundles and varieties},   Surv. Differ. Geom., 10, Int. Press, (2006),  221--273.% \red{I want to use this paper. Where to put it?}
       \bibitem{SYZ}  A. Strominger, S.T. Yau, E. Zaslow,
 {\em  Mirror symmetry is T-duality,}  \rm Nuclear Physics B, Vol. 479, (1996), 243--259.
\bibitem{W} X. Wang, {\em
Moment map, Futaki invariant and stability of projective manifolds.}
Comm. Anal. Geom. 12 no.5,  (2004),  1009--1037. % \red{I want to use this paper. Where to put it?}
 \bibitem{WY} X.  Wang, H. Yu,  {\em Theta function and Bergman metric on abelian varieties}, \rm  New York J. Math. 15 (2009), 19--35.
  \bibitem{Yau1}  S.T. Yau,  {\em On the Ricci curvature of a compact  K\"{a}hler
manifold and complex Monge-Amp\`{e}re  equation I}, \rm  Comm. Pure
Appl. Math.  31  (1978),  339--411.
 \bibitem{ZS} S. Zhang, {\em Heights and reductions of semi-stable varieties,} Compositio Math. 104 (1996), 77--105.
\end{thebibliography}
\end{document}